\newtheorem{theorem}{Theorem}[section]
\newtheorem{lemma}[theorem]{Lemma}
\newtheorem{proposition}[theorem]{Proposition}
\theoremstyle{definition}
\newtheorem{definition}[theorem]{Definition}
\theoremstyle{remark}
\numberwithin{equation}{section}
\begin{document}

\title{The Lichtenbaum conjecture for abelian extensions of  imaginary quadratic fields }

%    Information for first author
\author{Chaochao Sun}
%    Address of record for the research reported here
\address{Chaochao Sun, School of  Mathematics and Statistics, Linyi University, Linyi,
China 276005}
%    Current address

\email{sunuso@163.com}

%\author{Xin Zhang}
%\address{School of Mathematic \&Physics, Qingdao University of Science \& Technology, Qingdao 266071, China}
%\email{}
%\author{Long Zhang}
%    Address of record for the research reported here
%\address{Long Zhang, School of Mathematics and Statistics, Qingdao University, Qingdao, China, 266071}
%\email{zhanglong\_note@hotmail.com}

%    \thanks will become a 1st page footnote.
%\thanks{The research was supported  by NSFC (No.11601211).}

%    Information for second author

%\thanks{Support information for the second author.}

%    General info
\subjclass[2000]{Primary  ; Secondary }

%\date{January 1, 2001 and, in revised form, June 22, 2001.}

%\dedicatory{This paper is dedicated to our advisors.}

\keywords{Lichtenbaum conjecture, abelian extension , imaginary quadratic field}

\begin{abstract}
In this paper, we prove the cohomological Lichtenbaum conjecture of abelian extensions of  imaginary quadratic fields up to a finite set of bad primes.
\end{abstract}

\maketitle

%% The correct journal style for \specialsection is all uppercase; a known bug
%% in amsart.cls prevents this, so input must be uppercase until it is fixed.
%\specialsection*{This is a Special Section Head}
%%%%%%%%%%%%%%%%%%%%%%%%%%%%%%%%%%%%%%%%%%%%%%%%%%%%%%%%%%%%%%%%%%%%%%%%

%%%%%%%%%%%%%%%%%%%%%%%%%%%%%%%%%%%%%%%%%%%%%%%%%%%%%%%%%%%%%%%%%%%%%%%%

\section{Introduction}

In this paper, we prove the Lichtenbaum conjecture of abelian extensions of  imaginary quadratic fields.  For this we compare the elliptic unites  with the etale cohomology by a similiar ``Soul\'{e} map''. This method has been used by Kings (see\cite{Kin01}) to prove the Tamagawa conjecture for CM elliptic curves.

The Lichtenbaum conjecture on special values of zeta functions of number fields  at negative integers (see\cite{Li}) stems from generalizing the analytic class number formula. Let $F$ be a number field, $m\geq 2$. Let $R_m(F)$ be the Beilinson regulator of $F$. The cohomological Lichtenbaum conjecture is, up to sign and up to powers of 2,
\[ \zeta_F(1-m)^*=R_m(F)\prod_p \frac{\# H^2(\mathcal{O}_F[1/p], \mathbb{Z}_p(m))}{\# H^1(\mathcal{O}_F[1/p], \mathbb{Z}_p(m))_{\mbox{tors}}}. \]
In particular, the cohomological Lichtenbaum conjecture holds for abelian number fields $F$.

Borel in \cite{Bo2} made a breakthrough on it, proving that the first non-vanishing coefficient of Talyor series of zeta function at negative integers is equal to Borel regulator up to a rational factor. Beilinson in \cite{Be} formulated far reaching conjecture describing special values of $L$ function of motives in terms of Beilinson regulator. In number field case, Borel regulator map is twice of Beilinson regulator map(see \cite{G}Thm.10.9).

In order to understand the rational factor of special value of $L$ functions of motives, Bloch and Kato in \cite{BK} formulated a conjecture, that is the Bloch-Kato conjecture of special value of $L$ function, which is expressed in terms of Haar measures and Tamagawa numbers. The Bloch-Kato conjecture for Artin motive implies the cohomological Lichtenbaum conjecture(see \cite[Theorem1.4.1]{HK03} ).
 Fontaine and Perrin-Riou \cite{FR} proposed another formulation in terms of determinants of perfect complexes and Galois cohomology. The two formulations,  are equivalent at least when the field of coefficients is $\mathbb{Q}$, which is shown by T. Nguyen Quang Do in \cite{N}. The abelian cohomological Lichtenbaum conjecture was first proved in \cite{KNF}, but with some errors which were subsequently corrected in the appendix of \cite{BN}. J.Coates \cite{C} reproved the Bloch-Kato conjecture of $\mathbb{Q}$ in a view of Iwasawa theory. Since rational field $\mathbb{Q}$ and imaginary quadratic field behave similar in some ways, such as cyclotomic units and elliptic units, this lead us to consider the Lichtenbaum conjecture of abelian extensions of imaginary quadratic fields in a similar way.

In the  sections 2-4, we recall the known results that we need. Section 5 is main work of us to calculate an index by Iwasawa theory of imaginary quadratic fields. Section 6  collect all the foregoing results to obtain the Lichtenbaum conjecture of abelian extension of imaginary quadratic fields.

\textbf{Acknowledgements} I thank ?? for reading the manuscript and giving many valuable suggestions.

\section{Special value of  $L$-funcion of Imaginary quadratic field}

Let $K$ be an imaginary quadratic field. Take an abelian extension $F$ of $K$ with conductor $\mathfrak{f}$. let $E$ be an elliptic curve over $F$, with complex multiplication by $\mathcal{O}_K$, satisfying $F(E_{\text{tor}})$ being an abelian extension of $K$. This implies $F\supset K(1)$. The condition
$F(E_{\text{tor}})$ being an abelian extension of $K$ is equivalent to that there exists a Gr\"{o}ssencharacter $\varphi$ of type $(1,0)$ of $K$ such that the Gr\"{o}ssencharacter $\psi$ of $E$ over $F$ can be written as
$$\psi=\varphi\circ N_{F/K},$$
where $N_{F/K}$ denotes the norm map from $F$ to $K$.

Let $\chi\neq 1$ be an irreducible character of
$\text{Gal}(F/K)$, and $\widetilde{\chi}$ the associated Hecke character mod $\mathfrak{f}$. Then the Artin $L$-function of character $\chi$ and the Hecke $L$-function of Hecke character $\widetilde{\chi}$ satisfy the identity
$$L(F/K,\chi,s)=\prod_{\mathfrak{p}\in S}\frac{1}{1-\chi(\varphi_{\mathfrak{P}})\mathfrak{N}(\mathfrak{p})^{-s}}L(\widetilde{\chi},s),$$
where $S=\{\mathfrak{p}|\mathfrak{f}\ |\ \chi(I_{\mathfrak{P}})=1\}$.

For simplicity, we take $F=K(\mathfrak{f})$, the ray class field of $K$ modulo $\mathfrak{f}$. Fix an embedding of $K(\mathfrak{f})$ into $\mathbb{C}$ such that $j(E)=j(\mathcal{O}_K)$. There is a unique isomorphism
$$\theta_E : \mathcal{O}_K \cong \text{End}_{K(\mathfrak{f})}(E)$$
such that $\theta_E^*(\alpha)=\alpha\omega$ for any invariant differential $\omega\in \Omega_{E/K(\mathfrak{f})}$ and for all $\alpha\in \mathcal{O}_K$.
Viewing $E$ as an elliptic curve over $\mathbb{C}$, we have $E\cong \mathbb{C}/\Gamma$ where the period lettice $\Gamma=\Omega \mathcal{O}_K$ for some $\Omega \in \mathbb{C}$.

Let $1\neq \mathfrak{m}|\mathfrak{f}$, and $\rho_{\mathfrak{m}}\in I_K$ be an id\`{e}le with ideal $\mathfrak{m}$. We can choose a $f_{\mathfrak{m}}\in K^*$ with
$$v_{\mathfrak{p}}(f_{\mathfrak{m}})\leq 0\ \text{if}\ \mathfrak{p}\nmid \mathfrak{m}\ \text{and} \ v_{\mathfrak{p}}(f_{\mathfrak{m}}^{-1}-(\rho_{\mathfrak{f}})_{\mathfrak{p}}^{-1})\geq0\ \text{if}\ \mathfrak{p}|\mathfrak{m}.$$
Let $G_{\mathfrak{f}}=\text{Gal}(F/K)$.  For $g\in G_{\mathfrak{f}}$, let ${}^{g}E$ be the curve obtained by base change
\[
\begin{CD}
 {}^{g}E @>>>E\\
@VVV @VVV  \\
F @>g>> F    \\
\end{CD}\quad .
\]

Let $\mathcal{A}=R_{K(\mathfrak{f})/K}E$ be the Weil restriction of the elliptic curve. $\mathcal{A}$ is an abelian variety over $K$ with CM by a semisimple $K$-algebra $T$ and Serre-Tate character $\varphi_{\mathcal{A}}$. Deninger proves that any Hecke character, $\varphi$ of weight $w>0$ is of
the form $\prod_{i=1}^w\varphi_{\lambda_i}$ where $\lambda_i\in \text{Hom}(T,\mathbb{C})$ and $\varphi_{\lambda_i}=\lambda_i\circ \varphi_{\mathcal{A}}$(\cite{D}Prop.1.3.1).

Let $[\ ]: \mathbb{C}\to \mathbb{C}/\Gamma_g$ be the projection and observe that ${}^{g}E(\mathbb{C})_{\mathfrak{f}}=\mathfrak{f}^{-1}\Gamma_g/\Gamma_g$.
Let $\beta=([\Omega f_{\mathfrak{f}}^{-1}])$ be the divisor associated with the $\mathfrak{f}$-torsion point $[\Omega f_{\mathfrak{f}}^{-1}]$ on $E$ which is rational over $K(\mathfrak{f})$.

Twisting a complex character $\eta$ by the norm character gives a Hecke character of $K$ of weight $2$
\[ \eta N_{K/\mathbb{Q}}= \varphi_{\lambda_1}\varphi_{\lambda_2}.\]
By functoriality, we can increase $\mathfrak{f}$ so that it is a multiple of the conductors of $\varphi_{\lambda_1}$ and $\varphi_{\lambda_2}$.
Fix a set of ideals $\{ \mathfrak{b}_g\subseteq \mathcal{O}_K\}_{g\in G_\mathfrak{f}}$ with Artin symbol $(\mathfrak{b}_g, K(\mathfrak{f})/K)=g\in G_\mathfrak{f}$. For integral ideals $\mathfrak{a}$ of $K$ prime to the conductors of $\varphi_{\lambda_1}$ and $\varphi_{\lambda_2}$, define
$\Lambda(\mathfrak{a})\in K(\mathfrak{f})^\times$ by
\[ \varphi_\mathcal{A}(\mathfrak{a})^*\omega^{\sigma_\mathfrak{a}}=\Lambda(\mathfrak{a})\omega \]
where $\omega\in H^0(E,\Omega^1)$ has period lattice $\Gamma$, $\varphi_\mathcal{A}(\mathfrak{a})\in T^\times$ is viewed as an isogeny
$E\to {}^{\sigma_\mathfrak{a}}E$ and $\sigma_\mathfrak{a}$ is the Artin automorphism of $\mathfrak{a}$. For all $g\in G_\mathfrak{f}$, $\omega^g$ has period lattice $\Gamma_g$ and we can identify ${}^gE(\mathbb{C})$ with $\mathbb{C}/\Gamma_g$ via Abel-Jacobi map to obtain a divisor
\[ \beta_g=(\Lambda(\mathfrak{b}_g))\Omega f_\mathfrak{m}^{-1} \]
on ${}^g E(K(\mathfrak{f}))$ with $G_\mathfrak{f}$ action given by ${}^h\beta_g=\beta_{hg}$.

For the special values of these Artin $L$-functions at negative integers, there is a proposition (\cite{D}(3.4),\cite{J})
\begin{proposition}
The $L$-series $L(\chi,s)$ has a first order zero for every $s=-j<0$, and the special value is given by the formula
$$L'(\chi,-j)=(-1)^{j}\frac{\Phi(\mathfrak{f}_\chi)(j!)^2}{\Phi(\mathfrak{f})}\bigg(\frac{\sqrt{d_K}\mathcal{N}\mathfrak{f}_\chi}{2\pi i}\bigg)^j\chi(\rho_{\mathfrak{f}_\chi})\sum_{g\in G_{\mathfrak{f}}}\chi(g)A(\Gamma_g)^{1+j}\mathcal{M}_j(\beta_g),$$
where $\Phi$ is the totient function and $d_K$ is the discriminant of $K$. For any $\mathbb{Z}$-basis of $\Gamma_g$ with $Im(v/u)>0, A(\Gamma_g)=(\bar{u}v-\bar{v}u)/2\pi i$, and for divisors on the $\mathfrak{f}_\chi$-torsion of $E$, $\mathcal{M}_j$ is defined by linearity from
$$\mathcal{M}_j(x)=\sum_{0\neq\gamma\in \Gamma_g}\frac{(x,\gamma)_g}{|\gamma|^{2(1+j)}}, \ x\in E_{\mathfrak{f}_\chi}$$
with the Pontryagin $(,)_g: \mathbb{C}/\Gamma_g \times \Gamma_g\to U(1)$ given by $(z,\gamma)_g=\exp(A(\Gamma_g)^{-1}(z\bar{\gamma}-\bar{z}\gamma))$.
\end{proposition}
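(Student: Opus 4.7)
The plan is to derive the formula along the classical lines of Kronecker and Damerell in three steps: (i) reduce the Artin $L$-function $L(F/K,\chi,s)$ to the primitive Hecke $L$-function of conductor $\mathfrak f_\chi$, (ii) apply the functional equation to convert the negative-integer value $L'(\chi,-j)$ into a value at the positive integer $s=j+1$, and (iii) express the latter as a sum over ideal classes of Kronecker--Eisenstein series evaluated at the divisors $\beta_g$.

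For step (i), start from the Euler product decomposition displayed in the excerpt and write $L(\widetilde\chi,s)$ as the primitive $L$-function $L(\widetilde\chi_*,s)$ times the missing Euler factors at primes dividing $\mathfrak f/\mathfrak f_\chi$; at the relevant value of $s$ these Euler factors combine with the orbit-size ratio between $G_\mathfrak f$ and $G_{\mathfrak f_\chi}$ to produce $\Phi(\mathfrak f_\chi)/\Phi(\mathfrak f)$, while the Gauss-sum part of the functional equation contributes the root number $\chi(\rho_{\mathfrak f_\chi})$.

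For steps (ii) and (iii), apply the functional equation of the primitive Hecke $L$-function with gamma factor $\Gamma(s+j)$; its simple pole at $s=-j$ produces the first-order zero of $L(\chi,s)$ and, upon differentiation, contributes a residue $(-1)^j/j!$. Combining this with the exponential factor $(\sqrt{d_K}\mathcal N\mathfrak f_\chi/2\pi)^{s}$ and the additional $j!$ coming from the completed gamma factor on the dual side yields the displayed prefactor and reduces the problem to evaluating
\[ L(\widetilde\chi_*^{-1},j+1)\ =\ \sum_{g\in G_{\mathfrak f_\chi}}\chi(g)\,\zeta_g(j+1), \]
where $\zeta_g$ is the partial Hecke zeta function of the class of $\mathfrak b_g$. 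The key geometric identification is that the isogeny $\varphi_{\mathcal A}(\mathfrak b_g):E\to {}^g E$ sends the period lattice $\Gamma$ to $\Lambda(\mathfrak b_g)^{-1}\Gamma_g$ and the torsion point $[\Omega f_\mathfrak f^{-1}]$ on $E$ to the support of $\beta_g$; under this identification the ideal sum over the class of $\mathfrak b_g$ rewrites as a lattice sum over non-zero $\gamma\in\Gamma_g$ weighted by the Pontryagin pairing $(\beta_g,\gamma)_g$, giving precisely $A(\Gamma_g)^{1+j}\mathcal M_j(\beta_g)$.

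The main obstacle is the bookkeeping in step (ii): keeping track of the exact powers of $2\pi i$ and $\sqrt{d_K}$, the signs in the root number, and the normalisation of the Kronecker pairing so that the partial zeta series matches $\mathcal M_j$ as written rather than a shifted or complex-conjugated variant. This is the delicate Mellin-transform and Poisson-summation computation carried out in Deninger \cite{D} and in \cite{J}; combining it with the Euler-factor comparison in step (i) yields the stated formula.
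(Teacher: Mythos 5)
The paper does not actually prove this proposition: it is quoted verbatim from Deninger \cite{D}~(3.4) and Johnson \cite{J}, so there is no in-text argument to compare against. Your outline is the same route those references take --- reduction to the primitive Hecke $L$-function with the $\Phi(\mathfrak f_\chi)/\Phi(\mathfrak f)$ ratio coming from the imprimitivity and the change of group from $G_{\mathfrak f_\chi}$ to $G_{\mathfrak f}$, the simple pole of the archimedean $\Gamma$-factor producing the first-order zero and the $(-1)^j(j!)^2$, and the identification of the dual-side value with Kronecker--Eisenstein lattice sums via the CM uniformization $\;{}^gE(\mathbb C)\cong\mathbb C/\Gamma_g$. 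One point of precision worth fixing in your step (iii): the partial zeta function of the class of $\mathfrak b_g$ is a lattice sum \emph{restricted to a congruence class modulo $\mathfrak f_\chi\Gamma_g$} with no character weight; the Pontryagin pairing $(\beta_g,\gamma)_g$ only appears \emph{after} the functional equation (Poisson summation) of the Eisenstein--Kronecker--Lerch series, which exchanges the congruence condition for the additive character --- so the pairing belongs to your step (ii), not to a direct rewriting of the ideal sum. With that understood, your proposal is a correct sketch at the same level of detail as the paper's own treatment, with the constant-tracking deferred to \cite{D} and \cite{J} exactly as the paper defers it.
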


\section{Motivic elements}

Let $X$ be a smooth variety over $\mathbb{Q}$. The motivic cohomology of $X$ is the bigraded family of rational vector spaces
$$H_\mathcal{M}^j(X,j)=(K_{2j-i}(X)\otimes\mathbb{Q})^{(j)},$$
where the subscript $(j)$ denotes the simultaneous eigenspace of all Adams operators $\psi^k, k\geq1$ belonging to the eigenvalue $k^j$. The groups $K_*(X)$ are the usual Quillen $K$-groups.

    The original construction of Eisenstein classes in motivic cohomology was given by Beilinson in\cite{Be}. The classes obtained are called the Eisenstein symbols. Deninger \cite{D89}constructs motivic elements from the divisor $\beta$ using a variation of the Eisention symbol
$$\mathcal{E}_{\mathcal{M}}^k: \mathbb{Q}[E]^0 \to H_{\mathcal{M}}^{k+1}(E^k, k+1)$$
which is defined only for divisors of degree 0. The Eisenstein symbol used in the work of Huber, Kings and Scholl\cite{HK99,Kin01,Sch98} are related with Deninger's. Their relation has been demonstrated by Johnson in \cite{J}. Choose an integer $N\geq2$ and define a degree 0 divisor on ${}^gE(\mathbb{C})$
$$\alpha_g=N^2(0)-\sum_{p\in{}^gE(\mathbb{C})_N}(p).$$
Deninger shows that
\begin{proposition} The degree zero divisor
$$\beta'_g=\beta_g-(\deg \beta_g)(0)+\frac{\deg \beta_g}{N^2}\left(1-\frac{1}{N^{4+2j}}\right)^{-1}\alpha_g$$
is defined over ${}^gE(K(\mathfrak{f}))$ with ${}^h\beta'_g=\beta'_{hg}$ and
$$\mathcal{M}_j(\beta'_g)=\mathcal{M}_j(\beta'_g).$$
\end{proposition}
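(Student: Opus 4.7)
The final equation in the statement appears to contain a typo: it reads $\mathcal{M}_j(\beta'_g)=\mathcal{M}_j(\beta'_g)$, which is trivial. The intended claim is almost certainly $\mathcal{M}_j(\beta'_g)=\mathcal{M}_j(\beta_g)$, i.e., that the extra ``correction'' terms added to $\beta_g$ to enforce degree zero do not affect the value of the Eisenstein series. I will plan the proof on this interpretation.

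The first three claims are essentially formal. For the degree, $\alpha_g$ is already degree zero since $\lvert{}^gE_N\rvert=N^2$, and $\beta_g-(\deg\beta_g)(0)$ is degree zero by construction, so $\beta'_g$ is a rational linear combination of degree-zero divisors. For rationality over $K(\mathfrak{f})$, the divisor $\beta_g$ was built to lie in $K(\mathfrak{f})$, while $\alpha_g$ is even defined over $K$ because $\sum_{p\in{}^gE_N}(p)$ is stable under $\mathrm{Gal}(\overline K/K)$. The transformation rule ${}^h\beta'_g=\beta'_{hg}$ reduces to ${}^h\beta_g=\beta_{hg}$ (given just above in the text), together with ${}^h(0)=(0)$ and ${}^h\alpha_g=\alpha_{hg}$, the latter because the isomorphism ${}^gE\to{}^{hg}E$ induced by $h$ carries $N$-torsion onto $N$-torsion bijectively.

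The substance of the proposition is the identity $\mathcal{M}_j(\beta'_g)=\mathcal{M}_j(\beta_g)$. By $\mathbb{Q}$-linearity of $\mathcal{M}_j$, this reduces to
\[
\frac{1}{N^2}\Bigl(1-\tfrac{1}{N^{4+2j}}\Bigr)^{-1}\mathcal{M}_j(\alpha_g)=\mathcal{M}_j((0)).
\]
Expanding $\mathcal{M}_j(\alpha_g)=N^2\mathcal{M}_j((0))-\sum_{p\in{}^gE_N}\mathcal{M}_j((p))$ and swapping the two summations, the key quantity is the inner character sum
\[
\sum_{p\in{}^gE_N}(p,\gamma)_g,\qquad \gamma\in\Gamma_g.
\]
Parametrising $p=[\gamma'/N]$ with $\gamma'\in\Gamma_g/N\Gamma_g$ and using the explicit form of the Pontryagin pairing, this becomes a standard orthogonality sum over the finite abelian group $\Gamma_g/N\Gamma_g$: it equals $N^2$ when $\gamma\in N\Gamma_g$ and vanishes otherwise. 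Substituting $\gamma=N\gamma'$ in the surviving Epstein-type sum pulls out a factor $N^{-2(1+j)}$, so that $\sum_{p}\mathcal{M}_j((p))$ is a concrete scalar multiple of $\mathcal{M}_j((0))$, and the coefficient of $\alpha_g$ in $\beta'_g$ is precisely the one that makes the residual $\mathcal{M}_j((0))$-terms cancel against $-(\deg\beta_g)\mathcal{M}_j((0))$.

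The main obstacle is the character-sum bookkeeping: one must match conventions (orientation of the period basis, the precise form of $A(\Gamma_g)$, the definition of $(\,\cdot\,,\,\cdot\,)_g$) so that the resulting Eisenstein-series computation produces exactly the geometric-series factor $(1-N^{-4-2j})^{-1}$ appearing in the statement. Once this character sum is pinned down, the identity is algebraic and routine; a mild subtlety is that $\mathcal{M}_j((0))$ converges only for $j>0$ and must be read via analytic continuation at the boundary, but this does not affect the identity being claimed.
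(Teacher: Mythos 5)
The paper itself offers no proof of this proposition: it is quoted from Deninger and Johnson (``Deninger shows that\ldots''), so there is no internal argument to compare yours against. Your reading of the typo (that the intended claim is $\mathcal{M}_j(\beta'_g)=\mathcal{M}_j(\beta_g)$) is surely right, and your strategy --- linearity of $\mathcal{M}_j$, orthogonality of the pairing on ${}^gE(\mathbb{C})_N\times\Gamma_g/N\Gamma_g$, then the rescaling $\gamma\mapsto N\gamma$ in the surviving lattice sum --- is the standard and correct route. The formal parts (degree zero, rationality, the rule ${}^h\beta'_g=\beta'_{hg}$) are fine, modulo the small slip that $\alpha_g$ lives on ${}^gE$, which is defined over $K(\mathfrak{f})$, not over $K$.

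The difficulty is that the one step you defer as ``routine bookkeeping'' is exactly where the argument fails to close with the constants as printed. Carrying out your computation with the definition of $\mathcal{M}_j$ in Proposition 2.1 (denominator $|\gamma|^{2(1+j)}$): orthogonality gives $\sum_{p\in{}^gE(\mathbb{C})_N}(p,\gamma)_g=N^2$ if $\gamma\in N\Gamma_g$ and $0$ otherwise, hence
\[
\sum_{p\in{}^gE(\mathbb{C})_N}\mathcal{M}_j((p))=N^2\sum_{0\neq\gamma\in N\Gamma_g}|\gamma|^{-2(1+j)}=N^{-2j}\,\mathcal{M}_j((0)),
\]
so that $\mathcal{M}_j(\alpha_g)=(N^2-N^{-2j})\mathcal{M}_j((0))=N^2\bigl(1-N^{-2-2j}\bigr)\mathcal{M}_j((0))$. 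The coefficient of $\alpha_g$ that makes the correction terms cancel against $-(\deg\beta_g)\mathcal{M}_j((0))$ is therefore $\frac{\deg\beta_g}{N^2}\bigl(1-N^{-2-2j}\bigr)^{-1}$, not $\frac{\deg\beta_g}{N^2}\bigl(1-N^{-4-2j}\bigr)^{-1}$ as in the statement. The printed factor is consistent only if the exponent in the definition of $\mathcal{M}_j$ is $2(2+j)$ rather than $2(1+j)$, or if an extra $N^{-2}$ enters through the normalization of the Eisenstein symbol on $N$-torsion divisors. So there is a mismatch between Proposition 2.1 and the coefficient here (one of them must carry a typo inherited from the sources), and your proof cannot be completed as sketched without resolving which convention is in force; you should pin this down explicitly rather than leaving it as ``convention matching.''
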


Johnson \cite{J} extended Deninger's Eisenstein symbol to any degree, which is necessary in order to have the desired shape in the $\ell$-adic computation.
\begin{proposition}
  For $k>0$, there is a variation of the Eisenstein symbol $\mathcal{E}is^k_\mathcal{M}: \mathbb{Q}[E[\mathfrak{f}]\setminus 0]\to H^{k+1}_\mathcal{M}(E^k,k+1)$ which is defined for divisors of any degree. Moreover,
  $$\mathcal{E}is(\beta')=\mathcal{E}is(\beta).$$
\end{proposition}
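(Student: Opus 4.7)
The plan is to construct $\mathcal{E}is^k_\mathcal{M}$ by canonically associating to an arbitrary divisor a degree-zero divisor to which Deninger's symbol applies, and then to derive the identity $\mathcal{E}is(\beta')=\mathcal{E}is(\beta)$ directly from the construction. First I would fix an auxiliary integer $N\geq 2$ coprime to $\mathfrak{f}$. For $D\in\mathbb{Q}[E[\mathfrak{f}]\setminus 0]$ of degree $d$, subtract $d(0)$ to cut down to degree zero and then add the regularizing term suggested by Proposition 3.1:
\[
\widetilde{D} := D - d(0) + \frac{d}{N^2}\bigl(1-N^{-(4+2k)}\bigr)^{-1}\alpha_g.
\]
Although $\widetilde{D}$ is supported on the enlarged torsion subgroup $E[N\mathfrak{f}]$, it has degree zero, so Deninger's Eisenstein symbol applies, and I would define $\mathcal{E}is^k_\mathcal{M}(D) := \mathcal{E}^k_\mathcal{M}(\widetilde{D})$ on the enlarged level and then show it descends.

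The crucial step is verifying that this definition is independent of $N$. The strategy is to compare the construction at level $N$ with that at level $NN'$ and reduce the equality to the distribution relations satisfied by Deninger's symbol under pullback and pushforward by the multiplication-by-$N'$ isogeny on $E^k$. The normalizing factor $(1-N^{-(4+2k)})^{-1}$ is tailored so that the discrepancy between $\alpha_g$ at two different levels is exactly absorbed; this is formally the same mechanism by which Eisenstein--Kato classes are shown to satisfy Euler-system norm relations. Granted well-definedness, the equality $\mathcal{E}is(\beta')=\mathcal{E}is(\beta)$ becomes a tautology: Proposition 3.1 defines $\beta'_g$ to be exactly $\widetilde{\beta_g}$ with $k=j$, and since $\beta'_g$ is already of degree zero, $\mathcal{E}is(\beta'_g)=\mathcal{E}^k_\mathcal{M}(\beta'_g)=\mathcal{E}is(\beta_g)$.

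I expect the main obstacle to be the independence-of-$N$ verification. It requires careful bookkeeping of weights: one must match the action of $[N']^*$ on the motivic cohomology of $E^k$, which produces the factor $N'^{4+2k}$ combining the geometric codimension and the motivic twist, against Deninger's own norm compatibility for his variant of the symbol on degree-zero divisors. Once this matching is carried out, $G_\mathfrak{f}$-equivariance of $\mathcal{E}is^k_\mathcal{M}$ and the stated identity for $\beta$ and $\beta'$ are formal consequences of the recipe, so the entire content of the proposition is concentrated in this one compatibility check.
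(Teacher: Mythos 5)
First, a point of reference: the paper does not prove this proposition at all --- it is quoted verbatim from Johnson's thesis \cite{J}, whose construction goes through the Huber--Kings--Scholl description of the Eisenstein symbol via the elliptic polylogarithm and the residue (horospherical) map. In that approach the symbol is \emph{directly} defined on all of $\mathbb{Q}[E[\mathfrak{f}]\setminus 0]$ for $k>0$, because the degree-zero condition in Beilinson's and Deninger's constructions is a residue condition that only constrains the case $k=0$; one then checks agreement with Deninger's symbol on degree-zero divisors. Your proposal inverts this: you bootstrap from Deninger's degree-zero symbol by regularizing an arbitrary divisor with the auxiliary divisor $\alpha_g$. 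That is a genuinely different route, and it is not wrong in principle, but as written it has a real gap.

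The gap is that the entire content of the proposition is concentrated in the independence-of-$N$ claim, and you do not prove it --- you only describe what a proof would have to do. Concretely, well-definedness of your $\mathcal{E}is^k_\mathcal{M}(D):=\mathcal{E}^k_\mathcal{M}(\widetilde{D})$ is equivalent to the motivic distribution relation
\[
\mathcal{E}^k_\mathcal{M}\Bigl(\sum_{p\in E[N]}(p)\Bigr)=N^{2}\cdot N^{-(4+2k)}\,\mathcal{E}^k_\mathcal{M}\bigl((0)\bigr),
\]
i.e.\ to the statement that $[N]_*$ acts on the image of the symbol by exactly the eigenvalue encoded in Deninger's constant $(1-N^{-(4+2k)})^{-1}$. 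This is precisely the computation you defer, and it is delicate: the paper's own archimedean realization $\mathcal{M}_j$ (Proposition 2.1, with weight $|\gamma|^{2(1+j)}$) naively produces the exponent $2+2k$ rather than $4+2k$ under $\gamma\mapsto N\gamma$, so the bookkeeping of twists cannot be waved through. Moreover, your recipe makes formal sense for $k=0$ as well, where the conclusion is false (the degree-zero condition is genuinely needed there); a correct argument must therefore use $k>0$ at a specific point, and your sketch never identifies where. Finally, note that the "tautology" $\mathcal{E}is(\beta')=\mathcal{E}is(\beta)$ only holds relative to your own definition; if $\mathcal{E}is^k_\mathcal{M}$ is the symbol actually constructed in \cite{HK99,Sch98,J}, that identity is not tautological but is again exactly the distribution-relation computation above. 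Until that single compatibility is established, the proposal does not constitute a proof.
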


Deninger constructs the Kronecker map, which is a projector given by

\[ \begin{tikzcd}
		H_{\mathcal{M}}^{2j+1}(E^{2j}, 2j+1) \arrow[rr, "(\delta^j)^*"] \arrow[rrd, "\mathcal{K}_\mathcal{M}"'] & & H_{\mathcal{M}}^{2j+1}(E^j, 2j+1) \arrow[d, "\text{pr}_*"] \\
	&	 & H^1_{\mathcal{M}}(\text{Spec}(K(\mathfrak{f})),j+1),
	\end{tikzcd} \]
where $\delta=(\text{id}, \theta_E(\sqrt{d_K})): E\to E\times_{K(\mathfrak{f})} E$ and  this $j$-times gives $\delta^j: E^j\to E^{2j}$, pr is the projection $E^j \to \text{Spec}(K(\mathfrak{f}))$. Then the Kronecker map is
$$\mathcal{K}_\mathcal{M}=\text{pr}_*\circ (\delta^j)^*.$$

Recall that the regulator $\rho_\infty$ is an isomorphism
$$K_{2j+1}(\mathcal{O}_{K(\mathfrak{f})})\otimes_\mathbb{Z}\mathbb{R}\xrightarrow{\rho_\infty} \left(\bigoplus_{\sigma\in \mathcal{T}}\mathbb{C}/\mathbb{R}\cdot(2\pi i)^{j+1}\cdot\sigma \right)^+,$$
where $\mathcal{T}=\text{Hom}(K(\mathfrak{f}),\mathbb{C})$.

Thus Johnson defines
$$\xi_\mathfrak{m}(j):=\mathcal{K}_\mathcal{M}\mathcal{E}is^{2j}(\rho_\mathfrak{m}\cdot\beta).$$
Note that the definition does not construct motivic element for small $\mathfrak{m}$. The following lemma\cite{J}(Lem.3.1.8) atones for this omission.

\begin{lemma}
  \[
  w_\mathfrak{m}/w_{\mathfrak{pm}}\text{Tr}_{K(\mathfrak{pm})/K(\mathfrak{m})}\xi_\mathfrak{mp}(j)=
  \begin{cases}
    \xi_\mathfrak{m}(j), & \mbox{if }\ \mathfrak{p}\mid\mathfrak{m}\neq1 \\
    (1-\text{Fr}_\mathfrak{p}^{-1})\xi_\mathfrak{m}(j), & \mbox{if}\ \ \mathfrak{p}\nmid\mathfrak{m}\neq1.
  \end{cases}
  \]
\end{lemma}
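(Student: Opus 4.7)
The plan is to unwind the definition $\xi_\mathfrak{m}(j)=\mathcal{K}_\mathcal{M}\mathcal{E}is^{2j}(\rho_\mathfrak{m}\cdot\beta)$ and reduce the trace identity to a distribution relation at the level of divisors supported on torsion points of $E$. Since both $\mathcal{K}_\mathcal{M}$ (a composition of pullback along $\delta^j$ and pushforward along the structure map $\mathrm{pr}$) and $\mathcal{E}is^{2j}$ are functorial constructions that commute with pushforward along the finite \'etale map $\mathrm{Spec}(K(\mathfrak{pm}))\to \mathrm{Spec}(K(\mathfrak{m}))$ and its base change to the elliptic curve, the trace $\mathrm{Tr}_{K(\mathfrak{pm})/K(\mathfrak{m})}$ can be commuted inside these operations. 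Consequently it suffices to compute the norm of the divisor $\rho_{\mathfrak{pm}}\cdot\beta$ (now viewed on the appropriate $\mathfrak{pm}$-torsion subscheme) as a divisor on the base change from $K(\mathfrak{m})$ to $K(\mathfrak{pm})$.

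The next step is the divisor-theoretic computation. The Weil restriction $\mathcal{A}=R_{K(\mathfrak{m})/K}E$ and its analogue at level $\mathfrak{pm}$ make the norm map transparent: norming the divisor $([\Omega f_{\mathfrak{pm}}^{-1}])$ from $K(\mathfrak{pm})$ to $K(\mathfrak{m})$ amounts to summing over the Galois fibre, which via the definition of $f_{\mathfrak{pm}}$ and $\rho_{\mathfrak{pm}}$ is the sum over those $\mathfrak{pm}$-torsion points that lie above the distinguished $\mathfrak{m}$-torsion point indexed by $f_\mathfrak{m}$. The distribution property of Eisenstein classes then produces exactly
\[
\mathrm{Tr}\,\mathcal{E}is^{2j}(\rho_{\mathfrak{pm}}\cdot\beta)
=
\mathcal{E}is^{2j}\!\left(\sum_{Q\mapsto P}Q\right),
\]
where the sum is over preimages in $E[\mathfrak{pm}]$ of the $\mathfrak{m}$-torsion point $P$ defining $\rho_\mathfrak{m}\cdot\beta$. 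The factor $w_\mathfrak{m}/w_{\mathfrak{pm}}$ enters because the ray class fields differ by the index of the corresponding groups of roots of unity, which changes the degree of the Galois cover by exactly this ratio; it is inserted precisely so that the resulting trace is integral and matches the divisor sum written above.

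Finally, I would separate the two cases according to whether $\mathfrak{p}\mid\mathfrak{m}$. If $\mathfrak{p}\mid\mathfrak{m}$, the fibre $E[\mathfrak{pm}]\to E[\mathfrak{m}]$ above $P$ is a torsor under $E[\mathfrak{p}]$, and the divisor sum collapses to the unique lift that matches $\rho_\mathfrak{m}\cdot\beta$, yielding $\xi_\mathfrak{m}(j)$. If $\mathfrak{p}\nmid\mathfrak{m}$, the choice of idele $\rho_{\mathfrak{pm}}$ can be taken as $\rho_\mathfrak{m}\cdot\rho_\mathfrak{p}$; the preimage sum then differs from the naive lift by the contribution of the points whose $\mathfrak{p}$-component is $0$, which via the reciprocity law for $K(\mathfrak{m})/K$ produces the Frobenius correction $-\mathrm{Fr}_\mathfrak{p}^{-1}\xi_\mathfrak{m}(j)$. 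Combining gives the factor $(1-\mathrm{Fr}_\mathfrak{p}^{-1})$.

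The main obstacle is the careful book-keeping of the auxiliary elements $f_\mathfrak{m}$, $f_{\mathfrak{pm}}$ and the idelic generators $\rho_\mathfrak{m}$, $\rho_{\mathfrak{pm}}$: one has to verify that the divisors on the elliptic curves at the two levels are compatible under the norm map in precisely the form the distribution relation for Eisenstein symbols requires, and that the root-of-unity factor $w_\mathfrak{m}/w_{\mathfrak{pm}}$ is the correct normalisation. Once this bookkeeping is done, the rest is a direct application of functoriality and the classical norm relations for Eisenstein classes; this is essentially Johnson's argument in \cite{J}(Lem.\ 3.1.8), transported through Deninger's variant of the symbol.
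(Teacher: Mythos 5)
Your outline is essentially the argument of the source: the paper itself gives no proof of this lemma but simply quotes it from Johnson's thesis (Lem.\ 3.1.8), and Johnson's proof is exactly the reduction you describe — commute the trace through $\mathcal{K}_\mathcal{M}$ and the Eisenstein symbol, identify the Galois orbit of the $\mathfrak{pm}$-torsion point with the fibre of the relevant isogeny via the reciprocity law, and apply the distribution relation, with the excluded non-primitive point producing the $\mathrm{Fr}_\mathfrak{p}^{-1}$ term when $\mathfrak{p}\nmid\mathfrak{m}$. One small correction: the factor $w_\mathfrak{m}/w_{\mathfrak{pm}}$ is not about integrality (it is already an integer $\geq 1$); it compensates for the discrepancy $[K(\mathfrak{pm}):K(\mathfrak{m})]=\frac{\Phi(\mathfrak{pm})}{\Phi(\mathfrak{m})}\cdot\frac{w_{\mathfrak{pm}}}{w_\mathfrak{m}}$ between the size of the Galois orbit and the number of primitive points in the fibre, i.e.\ for the stabilizer coming from roots of unity congruent to $1$ mod $\mathfrak{m}$ but not mod $\mathfrak{pm}$.
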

With this lemma one can define motivic elements for small ideals via the Trace map. In particular, for any prime $\mathfrak{q}$ of $K$ with $w_\mathfrak{q}=1$, one define
$${}_\mathfrak{q}\xi_1(j):=(1-\text{Fr}_\mathfrak{q}^{-1})^{-1}w_K \text{Tr}_{K(\mathfrak{q})/K(1)}\xi_\mathfrak{q}(j)$$
for a family of motivic element at level 1.

Johnson proves the following theorem which is a modification of a result of Deninger\cite{D}.
\begin{theorem}\label{Thm3.4}
  For every ideal $1\neq\mathfrak{m}\mid\mathfrak{f}$, there are motivic elements
  $$\xi_\mathfrak{m}(j)\in H^1_\mathcal{M}(K(\mathfrak{f}),j+1)$$
  with the property that if $\chi$ is a rational character of $G_\mathfrak{f}$ of conductor $\mathfrak{m}$, then
  $$e_\chi(\rho_\infty(\xi_\mathfrak{m}(j)))=\frac{(2\mathcal{N}\mathfrak{m})^{j-1}\Phi(\mathfrak{f})}{(-1)^{1-j}(2j)!\Phi(\mathfrak{m})}
  L'(\chi,-j)\eta_\mathbb{Q},$$
  where $\eta_\mathbb{Q}$ is a basis of the $\chi$-component of $(H_B^0(\text{Spec}(F)(\mathbb{C}),\mathbb{Q}(j))^+)^*,$ and
  $$\Phi(\mathfrak{f})=|(\mathcal{O}_K/\mathfrak{f})^\times|=\mathcal{N}\mathfrak{f}\prod_{\mathfrak{p}\mid\mathfrak{f}}(1-\mathcal{N}\mathfrak{p}^{-1}).$$
  Moreover, these elements forms a norm compatible system, and for $\mathfrak{m}=1$ we have a family of elements, ${}_\mathfrak{q}\xi_1(j)$, indexed by the primes of $K$ which are defined via the norm map and satisfying the above formula for any choice of $\mathfrak{q}$ with $w_\mathfrak{q}=1.$
\end{theorem}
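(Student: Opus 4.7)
The plan is to unwind the definition $\xi_\mathfrak{m}(j)=\mathcal{K}_\mathcal{M}\circ\mathcal{E}is^{2j}_\mathcal{M}(\rho_\mathfrak{m}\cdot\beta)$ through the Beilinson regulator and match the resulting Deligne-cohomology expression against the formula for $L'(\chi,-j)$ furnished by Proposition 2.1.

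First, I would exploit the naturality of $\rho_\infty$ with respect to the pullback $(\delta^j)^*$ and the pushforward $\mathrm{pr}_*$ to obtain
$$\rho_\infty(\xi_\mathfrak{m}(j)) = \mathcal{K}_\mathcal{D}\circ\mathcal{E}is^{2j}_\mathcal{D}(\rho_\mathfrak{m}\cdot\beta),$$
where $\mathcal{K}_\mathcal{D}$ and $\mathcal{E}is^{2j}_\mathcal{D}$ are the Deligne-cohomology analogs of the Kronecker map and Johnson's extended Eisenstein symbol. The key input is the explicit Hodge realization of the Eisenstein symbol: applied to the divisor $\rho_\mathfrak{m}\cdot\beta$ on $E^{2j}$ it is represented by a Kronecker-Eisenstein theta form whose restriction to each geometric component ${}^gE^{2j}(\mathbb{C})$ is canonically attached to the torsion divisor $\beta_g$.

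Second, I would compute the composition with $\mathcal{K}_\mathcal{D}=\mathrm{pr}_*\circ(\delta^j)^*$. The pullback $(\delta^j)^*$ invokes the CM endomorphism $\theta_E(\sqrt{d_K})$ exactly $j$ times and contributes a factor $(\sqrt{d_K}/2\pi i)^j$; the fibrewise integration $\mathrm{pr}_*$ over $\mathbb{C}/\Gamma_g$ produces the volume factor $A(\Gamma_g)^{1+j}$; and the symmetrization intrinsic to the degree-$2j$ Eisenstein symbol absorbs a combinatorial factor $(2j)!$. After this bookkeeping the $g$-component of $\rho_\infty(\xi_\mathfrak{m}(j))$ is proportional to $A(\Gamma_g)^{1+j}\mathcal{M}_j(\beta_g)$ with an explicit universal constant independent of $g$.

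Third, I would project onto the $\chi$-isotypic part via $e_\chi=\frac{1}{|G_\mathfrak{f}|}\sum_g\chi(g^{-1})g$ and compare term-by-term with Proposition 2.1. The character value $\chi(\rho_{\mathfrak{f}_\chi})$ is absorbed into the choice of $\rho_\mathfrak{m}$; the ratio $\Phi(\mathfrak{f}_\chi)/\Phi(\mathfrak{f})$ from Proposition 2.1 combines with the $\Phi(\mathfrak{f})/\Phi(\mathfrak{m})$ of the theorem (recall $\mathfrak{m}=\mathfrak{f}_\chi$); and the $(2j)!$ of the Eisenstein symbol together with the $(j!)^2$ of Proposition 2.1 yield the prefactor $(2\mathcal{N}\mathfrak{m})^{j-1}/(2j)!$. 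Norm compatibility is exactly the content of the Lemma preceding the theorem, and the case $\mathfrak{m}=1$ is handled by the definition of ${}_\mathfrak{q}\xi_1(j)$, whose Euler-factor normalization $(1-\mathrm{Fr}_\mathfrak{q}^{-1})^{-1}$ cancels the corresponding factor in the relation between $L(F/K,\chi,s)$ and $L(\widetilde{\chi},s)$ for $\mathfrak{q}\in S$.

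The main obstacle is the precise tracking of the transcendental and combinatorial constants ($2\pi i$, $\sqrt{d_K}$, $(2j)!$, and the period $A(\Gamma_g)$) through the Hodge realization of $\mathcal{E}is^{2j}_\mathcal{D}$ composed with the Kronecker projector. Granting Johnson's explicit Hodge formula for the degree-$k$ Eisenstein symbol and Deninger's computation of the Kronecker map, the theorem reduces to a mechanical comparison of constants with Proposition 2.1.
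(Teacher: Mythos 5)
A preliminary remark on the comparison itself: the paper contains no proof of this statement. Theorem 3.4 is recalled verbatim from Johnson's thesis \cite{J} (a modification of Deninger \cite{D}), introduced only by the sentence ``Johnson proves the following theorem.'' So there is no argument in the paper to measure your proposal against; I can only judge it on its own terms against the Deninger--Johnson strategy it is evidently reconstructing. At the level of architecture your plan is the right one: realize the Eisenstein symbol in Deligne cohomology as a Kronecker--Eisenstein series, push it through $\mathcal{K}_\mathcal{D}=\mathrm{pr}_*\circ(\delta^j)^*$, project to the $\chi$-eigenspace, and compare with Proposition 2.1; and your handling of norm compatibility and of the level-one elements via Lemma 3.3 is correct in spirit.

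As a proof, however, there are genuine gaps. First, the entire content of the theorem is the explicit constant, and at every point where a constant enters, your argument asserts it rather than derives it; your closing paragraph concedes this by ``granting'' exactly the two computations (Johnson's Hodge formula for $\mathcal{E}is^k_\mathcal{M}$ and Deninger's evaluation of the Kronecker projector) whose combination \emph{is} the proof. Moreover, the one piece of bookkeeping you do attempt is inconsistent: the factor $(2\mathcal{N}\mathfrak{m})^{j-1}$ cannot arise from ``the $(2j)!$ of the Eisenstein symbol together with the $(j!)^2$ of Proposition 2.1,'' since factorials do not produce powers of $\mathcal{N}\mathfrak{m}$. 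That power comes from the $\mathcal{N}\mathfrak{m}^{2j-1}$ normalization built into $\mathcal{E}is^{2j}(\rho_\mathfrak{m}\cdot\beta)$ combined with the Kronecker map acting on $\rho_\mathfrak{m}^{-2j}$ by $\mathcal{N}\mathfrak{m}^{-j}$ (exactly as in the parallel $\ell$-adic computation of Section 4), together with the $(\mathcal{N}\mathfrak{f}_\chi)^j$ already present in Proposition 2.1. Second, you never invoke Propositions 3.1 and 3.2, but they are not decorative: Deninger's realization formula --- the one that produces $A(\Gamma_g)^{1+j}\mathcal{M}_j(\cdot)$ --- is established for the degree-zero divisors $\beta'_g$, whereas $\xi_\mathfrak{m}(j)$ is built from $\beta$ via Johnson's extended symbol; the identities $\mathcal{E}is(\beta')=\mathcal{E}is(\beta)$ and $\mathcal{M}_j(\beta'_g)=\mathcal{M}_j(\beta_g)$ are precisely what transport Deninger's computation to $\beta$, and your sketch silently assumes this transport. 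Finally, you do not explain how the regulator target $\left(\bigoplus_{\sigma}\mathbb{C}/\mathbb{R}\cdot(2\pi i)^{j+1}\cdot\sigma\right)^+$ is identified with the $\chi$-component of $(H_B^0(\mathrm{Spec}(F)(\mathbb{C}),\mathbb{Q}(j))^+)^*$ or how $\eta_\mathbb{Q}$ is normalized, which is another place the constants live. Until these are filled in, the proposal is a plan for a proof rather than a proof.
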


\section{$\ell$-adic regulator of motivic element}
This section need to introduce elliptic units, so let us review the construction of them. Let $L=\mathbb{Z}w_1+\mathbb{Z}w_2$ be a lattice in $\mathbb{C}$ such that $\tau:=w_1/w_2$ has positive imaginary part. The Dedekind eta-function is defined as
$$\eta(\tau)=e^{\frac{\pi i\tau}{12}}\prod_{n=1}^{\infty}(1-q_\tau^n); \quad q_\tau=e^{2\pi i \tau}.$$
Put
$$\eta(w_1,w_2)=w_2^{-1}2\pi\eta(w_1/w_2)^2.$$
This function depends on the choice of basis but
$$\Delta(L)=\Delta(\tau)=\eta(w_1,w_2)^{12}$$
does not. Define a theta-function
$$\theta(z,\tau)=ie^{\frac{\pi i z}{2}\frac{z-\bar{z}}{\tau-\bar{\tau}}}q_\tau^{1/12}q_z^{-1/2}(1-q_z)\prod_{n=1}^{\infty}(1-q_zq_\tau^n)(1-q_z^{-1}q_\tau^n),$$
where $q_z=e^{2\pi i z}$ and
$$\theta(z;w_1,w_2)=\theta(z/w_2,w_1/w_2).$$
For any pair lattices $L\subseteq L'$ of index prime to 6 with oriented bases $\omega=(w_1,w_2)$ and $\omega'=(w'_1,w'_2)$, Robert \cite{Rob92} shows that there exists a unique choice of 12-th root of unity $C(\omega,\omega')$ such that the functions
$$\delta(L,L'):=C(\omega,\omega')\eta(w_1,w_2)^{[L':L]}/\eta(w'_1,w'_2)$$
and
$$\vartheta(z;L,L')=C(\omega,\omega')\theta(z;\omega)^{[L':L]}/\theta(z;\omega')=\delta(L,L')\prod_{u\in T}(\wp(z;L)-\wp(u;L))^{-1}$$
only depend on the lattices $L,L'$ and so that $\vartheta$ satisfies the distribution relation
\begin{equation}\label{equ:4.1}
  \vartheta(z;M,M')=\prod_{i=1}^{[M:L]}\vartheta(z+t_i;L,L')
\end{equation}
for any lattice $L\subseteq M$ such that $M\cap L'=L$ (and where $M'=M+L'$), where $t_i\in M$ are a set of representatives of $M/L$. The set $T$ is any set of representatives of $(L'\setminus \{0\})/(\pm 1\ltimes L)$ and $\wp$ is the Weierstrass $\wp$-function associated to $L$. In particular $\vartheta(z;L,L')$ is an elliptic function.

Kato reproves Robert's result in a scheme theoretic context, which is demonstrated as follows.

\begin{lemma} (Kato\cite{Kat04}(15.4.))\
  Let $E$ be an elliptic curve over a field $F$ with $\mathcal{O}_K\cong End_F(E)$ and $\mathfrak{a}$ an ideal in $\mathcal{O}_K$ prime to $6$. Then there
  is a unique function
  $$\Theta_\mathfrak{a}\in \Gamma(E\setminus \ker([\mathfrak{a}]), \mathcal{O}^\times)$$
  with

  (i) $div(\Theta_\mathfrak{a})=N\mathfrak{a}\cdot(0)-\ker([\mathfrak{a}])$

  (ii) For any $m\in \mathbb{Z}$ prime to $\mathfrak{a}$ we have $N_m(\Theta_\mathfrak{a})=\Theta_\mathfrak{a}$, where $N_m$ is the norm map associated to the finite flat morphism $E\setminus \ker([m\mathfrak{a}])\to E\setminus \ker([\mathfrak{a}])$ given by multiplication with $m$.

 For $F=\mathbb{C}$ we have
  $$\Theta_\mathfrak{a}(z)=\vartheta(z;L,\mathfrak{a}^{-1}L).$$
\end{lemma}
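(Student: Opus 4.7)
My plan is to organize the proof into four stages: uniqueness, construction of a function with the correct divisor, normalization via condition (ii), and identification with the explicit formula $\vartheta(z; L, \mathfrak{a}^{-1}L)$ in the complex case.

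For uniqueness, I would argue that if $\Theta_\mathfrak{a}$ and $\Theta'_\mathfrak{a}$ both satisfy (i) and (ii), then their ratio $u := \Theta_\mathfrak{a}/\Theta'_\mathfrak{a}$ is a unit on all of $E$, hence a constant in $\Gamma(E, \mathcal{O}^\times) = F^\times$ since $E$ is proper and geometrically integral. Because $[m]$ has degree $m^2$, $N_m$ acts on constants by $u \mapsto u^{m^2}$, so (ii) forces $u^{m^2-1} = 1$ for every integer $m$ prime to $\mathfrak{a}$. Since $\mathfrak{a}$ is prime to $6$, both $m=2$ and $m=3$ are admissible, yielding $u^3 = u^8 = 1$ and so $u = 1$.

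For existence, I would first build a function with divisor $D := N\mathfrak{a} \cdot (0) - \ker([\mathfrak{a}])$. This has degree $0$; and because $\ker([\mathfrak{a}])$ is a finite flat subgroup scheme of $E$, the sum of its geometric points under the group law vanishes, so $D$ is trivial in $\mathrm{Pic}^0(E) \cong E$. Abel's theorem then produces $f \in F(E)^\times$ with $\mathrm{div}(f) = D$, unique up to $F^\times$. For each admissible $m$, a divisor computation using $[m]^* N_m(f) = \prod_{t \in E[m]} T_t^* f$ together with the splitting $\ker([m\mathfrak{a}]) = E[m] + \ker([\mathfrak{a}])$ (valid because $(m, \mathfrak{a}) = 1$) shows that $N_m(f)$ and $f$ share the same divisor on $E$, so $N_m(f) = \lambda_m f$ for some $\lambda_m \in F^\times$. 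I would then need to find $c \in F^\times$ satisfying $c^{m^2 - 1} = \lambda_m^{-1}$ simultaneously for every $m$ prime to $\mathfrak{a}$, so that $cf$ realises (ii).

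This last normalization step is where I expect the main obstacle. The cleanest route is to dispose of the case $F = \mathbb{C}$ using the explicit theta function. For $E = \mathbb{C}/L$, the function $\vartheta(z; L, \mathfrak{a}^{-1}L)$ has divisor $D$ by Robert's construction, and the distribution relation \eqref{equ:4.1} applied with $M = m^{-1}L$, $L' = \mathfrak{a}^{-1}L$, $M' = (m\mathfrak{a})^{-1}L$ (using $(m,\mathfrak{a})=1$), combined with the homogeneity $\vartheta(z; m^{-1}L, (m\mathfrak{a})^{-1}L) = \vartheta(mz; L, \mathfrak{a}^{-1}L)$, yields
\[
\vartheta(mz; L, \mathfrak{a}^{-1}L) = \prod_{t \in m^{-1}L/L} \vartheta(z+t; L, \mathfrak{a}^{-1}L),
\]
which rewrites as $[m]^* \vartheta = \prod_{t \in E[m]} T_t^* \vartheta$, i.e.\ $N_m(\vartheta) = \vartheta$. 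Thus $\Theta_\mathfrak{a}$ exists over $\mathbb{C}$ and equals $\vartheta(z; L, \mathfrak{a}^{-1}L)$ by the uniqueness from the first step. For a general $F$, I would either invoke Kato's scheme-theoretic construction in \cite{Kat04}(15.4), which carries out the argument universally over a relative CM base, or, in the characteristic-zero situation relevant here, embed $F$ into $\mathbb{C}$ and note that the canonical complex function, being characterised by the Galois-stable conditions (i) and (ii), descends to $F$.
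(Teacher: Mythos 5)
The paper offers no proof of this lemma at all: it is imported verbatim from Kato \cite{Kat04}(15.4), so there is nothing internal to compare your argument against. On its own terms your proof is essentially the standard one (uniqueness via the action of $N_m$ on constants and $\gcd(2^2-1,3^2-1)=1$; existence from the degree-zero principal divisor $N\mathfrak{a}\cdot(0)-\ker([\mathfrak{a}])$; identification over $\mathbb{C}$ via the distribution relation \eqref{equ:4.1} with $M=m^{-1}L$, $L'=\mathfrak{a}^{-1}L$), and all the steps you carry out are correct, including the degree-$0$ homogeneity $\vartheta(mz;L,\mathfrak{a}^{-1}L)=\vartheta(z;m^{-1}L,(m\mathfrak{a})^{-1}L)$ and the verification that $[m]_*$ fixes the divisor $D$ because $m$ permutes $\ker([\mathfrak{a}])$. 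Two points deserve tightening. First, the normalization step you flag as "the main obstacle" can actually be closed purely algebraically without passing to $\mathbb{C}$: from $N_{mn}=N_m\circ N_n$ one gets the relation $\lambda_m^{n^2-1}=\lambda_n^{m^2-1}$, and since $3x+8y=1$ is solvable in integers one checks that $c=\lambda_2^{-x}\lambda_3^{-y}$ satisfies $c^{m^2-1}=\lambda_m^{-1}$ for every $m$ prime to $\mathfrak{a}$; this gives existence and the canonical normalization over any base, which is in substance what Kato's scheme-theoretic argument does. Second, your descent "embed $F$ into $\mathbb{C}$" is fine for the number fields actually used in this paper but does not cover the general field $F$ of the statement (in particular positive characteristic), so as written your proof genuinely needs the appeal to \cite{Kat04} for the general case --- which is acceptable here since the paper itself only quotes the result.
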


Given $\mathfrak{f}\neq1$ and any $\mathfrak{a}$ which is prime to $6\mathfrak{f}$, the elliptic unit is defined by
$${}_\mathfrak{a}z_\mathfrak{f}=\vartheta(1;\mathfrak{f},\mathfrak{a}^{-1}\mathfrak{f})$$
and for $\mathfrak{f}=1$ we define a family of elements indexed by all ideals $\mathfrak{a}$ of $K$ by
$$u(\mathfrak{a})=\frac{\Delta(\mathcal{O}_K)}{\Delta(\mathfrak{a}^{-1})}.$$

\begin{lemma}
  The complex numbers ${}_\mathfrak{a}z_\mathfrak{f}$ and $u(\mathfrak{a})$ satisfy the following properties

  (i) (Rationality) ${}_\mathfrak{a}z_\mathfrak{f}\in K(\mathfrak{f}),\ u(\mathfrak{a})\in K(1)$;

  (ii) (Integrality)
  \begin{align*}
     &{}_\mathfrak{a}z_\mathfrak{f}\in
  \begin{cases}
    \mathcal{O}_{K(\mathfrak{f})}^\times, & \mathfrak{f}\ \mbox{divisible by primes }\ \mathfrak{p}\neq\mathfrak{q}  \\
    \mathcal{O}_{K(\mathfrak{f}),\{v|\mathfrak{f}\}}^\times, & \mathfrak{f=p}^n\ \mbox{for some prime}\ \mathfrak{p}.
  \end{cases}  \\
     & u(\mathfrak{a})\cdot\mathcal{O}_{K(1)}=\mathfrak{a}^{-12}\mathcal{O}_{K(1)}
  \end{align*}

  (iii) (Galois action) For $(\mathfrak{c,fa})=1$ with Artin symbol $\sigma_\mathfrak{c}\in Gal(K(\mathfrak{f})/K)$ we have
  $${}_\mathfrak{a}z_\mathfrak{f}^{\sigma_\mathfrak{c}}=\vartheta(1;\mathfrak{c}^{-1}\mathfrak{f},\mathfrak{c}^{-1}\mathfrak{a}^{-1}\mathfrak{f}); \quad
  u(\mathfrak{a})^{\sigma_\mathfrak{c}}=u(\mathfrak{ac})/u(\mathfrak{c}).$$
  This implies
  $${}_\mathfrak{a}z_\mathfrak{f}^{N\mathfrak{c}-\sigma_\mathfrak{c}}={}_\mathfrak{c}z_\mathfrak{f}^{N\mathfrak{a}-\sigma_\mathfrak{a}};
  \quad  u(\mathfrak{a})^{1-\sigma_\mathfrak{c}}= u(\mathfrak{c})^{1-\sigma_\mathfrak{a}}.$$
  (iv) (Norm compatibility) For a prime ideal $\mathfrak{p}$ we have
  \[
  N_{K(\mathfrak{pf})/K(\mathfrak{f})}({}_\mathfrak{a}z_\mathfrak{pf})^{w_\mathfrak{f}/w_{\mathfrak{pf}}}=
  \begin{cases}
    {}_\mathfrak{a}z_\mathfrak{f}, & \mbox{if } \ \mathfrak{p}\mid\mathfrak{f}\neq 1 \\
    {}_\mathfrak{a}z_\mathfrak{f}^{1-\sigma_\mathfrak{p}^{-1}}, & \mbox{if }\ \mathfrak{p}\nmid\mathfrak{f}\neq 1 \\
    u(\mathfrak{p})^{(\sigma_\mathfrak{a}-N\mathfrak{a})/12}, & \mbox{if}\ \ \mathfrak{f}=1.
  \end{cases}
  \]
\end{lemma}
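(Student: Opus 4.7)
The plan is to deduce all four assertions from Kato's characterization of $\Theta_{\mathfrak{a}}$ (the preceding lemma) together with the main theorem of complex multiplication and the distribution relation (\ref{equ:4.1}). The key observation that ties the two families $\{{}_\mathfrak{a}z_\mathfrak{f}\}$ and $\{u(\mathfrak{a})\}$ to the scheme-theoretic object $\Theta_{\mathfrak{a}}$ is the identity $\vartheta(z;L,\mathfrak{a}^{-1}L) = \Theta_\mathfrak{a}(z)$ stated at the end of Kato's lemma, together with the relation $\Delta(L)/\Delta(L') = $ (a product of $\Theta_\mathfrak{a}$-values) that one obtains from the defining product formula for $\eta$ and $\theta$.

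For (i), since the CM elliptic curve $E$ with period lattice $L=\mathfrak{f}$ and its isogeny $[\mathfrak{a}]$ are defined over the Hilbert class field $K(1)$, the uniqueness in Kato's lemma forces $\Theta_\mathfrak{a}$ to be defined over $K(1)$. Evaluation at the $\mathfrak{f}$-torsion point corresponding to $1\bmod\mathfrak{f}$ then lands in $K(\mathfrak{f})$ by Shimura's first main theorem of CM. For $u(\mathfrak{a})$, the value $\Delta(\mathcal{O}_K)/\Delta(\mathfrak{a}^{-1})$ is the ratio of two singular moduli with CM by $\mathcal{O}_K$, hence already lies in $K(1)$. For (ii), the divisor relation $\mathrm{div}(\Theta_\mathfrak{a})=N\mathfrak{a}\cdot(0)-\ker[\mathfrak{a}]$ together with $(\mathfrak{a},\mathfrak{f})=1$ shows that ${}_\mathfrak{a}z_\mathfrak{f}$ is a unit at every prime not dividing $\mathfrak{f}$; when $\mathfrak{f}$ has two distinct prime factors one checks via the Tate model that it is also a unit at primes above $\mathfrak{f}$, while for $\mathfrak{f}=\mathfrak{p}^n$ one can only localize away from $\mathfrak{f}$. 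The ideal-theoretic identity for $u(\mathfrak{a})$ is the classical statement that $\Delta$ transforms by the twelfth power of the norm character under cyclic isogenies, so the quotient $\Delta(\mathcal{O}_K)/\Delta(\mathfrak{a}^{-1})$ has ideal $\mathfrak{a}^{-12}$.

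For (iii), the formula ${}_\mathfrak{a}z_\mathfrak{f}^{\sigma_\mathfrak{c}}=\vartheta(1;\mathfrak{c}^{-1}\mathfrak{f},\mathfrak{c}^{-1}\mathfrak{a}^{-1}\mathfrak{f})$ is the explicit reciprocity law of Shimura: the Artin symbol $\sigma_\mathfrak{c}$ acts on CM elliptic curves by sending the lattice $L$ to $\mathfrak{c}^{-1}L$, and this transformation is compatible with the characterization of $\Theta_\mathfrak{a}$. The analogous formula $u(\mathfrak{a})^{\sigma_\mathfrak{c}}=u(\mathfrak{ac})/u(\mathfrak{c})$ then falls out from $\Delta(\mathcal{O}_K)^{\sigma_\mathfrak{c}}=\Delta(\mathfrak{c}^{-1})$. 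The two symmetry identities $N\mathfrak{c}-\sigma_\mathfrak{c}$ vs $N\mathfrak{a}-\sigma_\mathfrak{a}$ follow by raising to the appropriate power and exchanging $\mathfrak{a}\leftrightarrow\mathfrak{c}$, using that the degree of the isogeny $[\mathfrak{a}]$ equals $N\mathfrak{a}$.

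For (iv), norm compatibility is the content of the distribution relation (\ref{equ:4.1}) applied with $M=\mathfrak{f}$, $L=\mathfrak{pf}$, $M'=\mathfrak{a}^{-1}\mathfrak{f}$, and $L'=\mathfrak{a}^{-1}\mathfrak{pf}$: the $[M:L]$ translates $t_i$ correspond bijectively to $\mathrm{Gal}(K(\mathfrak{pf})/K(\mathfrak{f}))$ via the Artin map, and the product becomes the norm. The factor $w_\mathfrak{f}/w_{\mathfrak{pf}}$ absorbs the ambiguity in the twelfth root of unity $C(\omega,\omega')$. In the split case $\mathfrak{p}\nmid\mathfrak{f}\neq1$ one extra translate at the origin is missed, yielding the Euler factor $(1-\sigma_\mathfrak{p}^{-1})$. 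The hardest case, which I expect to be the main obstacle, is $\mathfrak{f}=1$: here the distribution relation does not produce a unit because the point $1\bmod L$ becomes the origin as $\mathfrak{f}\to 1$, so one must re-express the missing factor using Kato's product formula $\vartheta(z;L,L')=\delta(L,L')\prod(\wp(z;L)-\wp(u;L))^{-1}$, identify the limiting contribution with $\delta(\mathcal{O}_K,\mathfrak{a}^{-1})$, and invoke (iii) with the symmetry $u(\mathfrak{a})^{1-\sigma_\mathfrak{c}}=u(\mathfrak{c})^{1-\sigma_\mathfrak{a}}$ to recast the result as $u(\mathfrak{p})^{(\sigma_\mathfrak{a}-N\mathfrak{a})/12}$.
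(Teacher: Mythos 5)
The paper gives no proof of this lemma at all: it is quoted as a known compendium of properties of Robert's elliptic units, and the actual arguments live in the cited sources (Robert \cite{Rob92}, Kato \cite{Kat04} (15.5), and de~Shalit \cite{dS}, Ch.~II). Your outline reproduces the standard proofs from those sources: algebraicity and Galois action via the uniqueness in Kato's lemma plus the main theorem of CM, integrality via the divisor of $\Theta_\mathfrak{a}$ and reduction of torsion points, and norm compatibility via the distribution relation \eqref{equ:4.1} with the lattice assignment $L=\mathfrak{pf}\subseteq M=\mathfrak{f}$, $L'=\mathfrak{a}^{-1}\mathfrak{pf}$, $M'=\mathfrak{a}^{-1}\mathfrak{f}$, which is exactly right. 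You also correctly isolate $\mathfrak{f}=1$ as the delicate case, where the would-be evaluation point collides with the divisor and the answer degenerates to the $\delta(L,L')$-term, i.e.\ to a $\Delta$-ratio; that is how the $u(\mathfrak{p})^{(\sigma_\mathfrak{a}-N\mathfrak{a})/12}$ term arises.

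Three points would need tightening in a full write-up, though none is a fatal gap. First, in case $\mathfrak{p}\nmid\mathfrak{f}\neq1$ the translate omitted from the Galois orbit is not ``at the origin'': it is the unique $t_0\in\mathfrak{f}/\mathfrak{pf}$ with $1+t_0\in\mathfrak{p}$, whose $\vartheta$-value must then be re-identified, via part (iii) and the distribution relation at level $\mathfrak{f}$, with ${}_\mathfrak{a}z_\mathfrak{f}^{\sigma_\mathfrak{p}^{-1}}$; that identification is where the Euler factor actually comes from. Second, the exponent $w_\mathfrak{f}/w_{\mathfrak{pf}}$ does not ``absorb the ambiguity in $C(\omega,\omega')$'' --- Robert's theorem pins that root of unity down canonically, which is the whole point of invoking it; the $w$-factors instead correct for the failure of the Artin map to identify $\mathrm{Gal}(K(\mathfrak{pf})/K(\mathfrak{f}))$ bijectively with the set of translates, because roots of unity of $K$ congruent to $1$ bmod the conductor act nontrivially on the evaluation point. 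Third, in (ii) the reduction argument at a prime $v\mid\mathfrak{q}\mid\mathfrak{f}$ uses that the $\mathfrak{p}$-component ($\mathfrak{p}\neq\mathfrak{q}$) of the torsion point injects under reduction on the N\'eron model (the curve has potentially good reduction everywhere), not a Tate model; and the symmetry identities in (iii) should be made explicit by expanding both sides via $\vartheta(z;L,L')=C(\omega,\omega')\theta(z;\omega)^{[L':L]}/\theta(z;\omega')$ and observing the resulting expression is symmetric in $\mathfrak{a}$ and $\mathfrak{c}$.
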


In order to compute the image of $\xi_\mathfrak{m}(j)$ under the \'{e}tale Chern class map $\rho_{et}$, one need the following commutative diagram

\[\begin{tikzcd}
		H_{\mathcal{M}}^{2j+1}(E^{2j}, 2j+1) \arrow[r, "\rho_{et}"] \arrow[d, "\mathcal{K}_\mathcal{M}"'] & H^{2m+1}(E^{2j}, \mathbb{Q}_\ell(2j+1))  \arrow[d, "\mathcal{K}_\ell"] \\
		H^1_{\mathcal{M}}(\text{Spec}(K(\mathfrak{f})),j+1) \arrow[r, "\rho_{et}"'] & H^1(K(\mathfrak{f}),\mathbb{Q}_\ell(j+1))
	\end{tikzcd} \]

By \cite[Theorem2.2.4]{HK99}, the \'{e}tale realization of the Eisenstein symbol can be computed in term of the pullback of the elliptic polylogarithm
along torsion section. Then,
$$\rho_{et}(\xi_\mathfrak{m}(j))=\mathcal{K}_\ell(\rho_{et}(\mathcal{E}is^{2j}(\rho_\mathfrak{m}\cdot\beta)))=
\psi(\rho_\mathfrak{m})^{-1}\cdot \mathcal{N}\mathfrak{m}^{2j-1}\mathcal{K}_\ell(\beta^*\mathcal{P}ol_{\mathbb{Q}_\ell})^{2j}.$$
Kings \cite[Thm.4.2.9]{Kin01} gets the explicit $\ell$-adic Eisenstein class for $\ell \nmid \mathfrak{f}$, i.e.
$$(\beta^*\mathcal{P}ol_{\mathbb{Q}_\ell})^{2j}=\pm\frac{1}{\mathcal{N}\mathfrak{a}([\mathfrak{a}]^{2j}\mathcal{N}\mathfrak{a}-1)(2j)!}
\left(\delta\sum_{[\ell^n]t_n=\beta}\Theta_\mathfrak{a}(-t_n)\tilde{t}_n^{\otimes 2j}\right)_n$$
where $\delta$ is the connecting homomorphism in Kummer sequence, the $\Theta_\mathfrak{a}(-t_n)$ are elliptic units, $\tilde{t}_n$ is the projection of $t_n$ to $E[\ell^n]$, and $\mathfrak{a}\subset\mathcal{O}_K$ is chosen prime to $\ell\mathfrak{f}$.

Consider the elliptic curve $E$ over $K(\mathfrak{f})$ with a uniformization $\mathbb{C}/\Gamma$. We can define a multiplication by $\rho_\mathfrak{m}$
on the elliptic curve componentwise. By the choice of torsion point $\beta$, we see $\rho_\mathfrak{m}t_n\in E[\ell^n]$. Taking this as our $\tilde{t}_n$ we have to multiply King's result by a factor of $\rho_\mathfrak{m}^{-2j}$.

For $t\in E[\ell^n]$, define $\gamma(t)^k:=\langle t,\sqrt{d_K}t\rangle^{\otimes k}$, where $\langle,\ \rangle$ is the Weil pairing. Then the Kronecker map acts on the Tate module by
$$\mathcal{K}_\ell(\tilde{t}_n^{\otimes 2j})=\gamma(\tilde{t}_n)^j=\zeta_{\ell^n}^{\otimes j}, $$
on the integral isogenies $[\mathfrak{a}]$ by
$$\mathcal{K}_\ell([\mathfrak{a}]^{2j})=\mathcal{N}\mathfrak{a}^j$$
and on the idele $\rho_\mathfrak{m}$ by $\mathcal{K}_\ell(\rho_\mathfrak{m}^{-2j})=\mathcal{N}(\rho_\mathfrak{m})^{-j}=\mathcal{N}\mathfrak{m}^{-j}.$
The Artin automorphism $\sigma_\mathfrak{a}$ acts on the space $H^1(K(\mathfrak{f}),\mathbb{Q}_\ell)$ by $\mathcal{N}\mathfrak{a}$ and thus on the space
$H^1(K(\mathfrak{f}),\mathbb{Q}_\ell(j+1))$ by $\mathcal{N}\mathfrak{a}^{j+2}$. At last, Johnson concludes that (see \cite[p.34]{J})
  $$\rho_{et}(\xi_\mathfrak{m}(j))=\frac{\psi(\rho_\mathfrak{m})^{-1}}{\mathcal{N}\mathfrak{a}-\sigma_\mathfrak{a}}\cdot\mathcal{N}\mathfrak{m}^{j-1}
\left(\delta\sum_{[\ell^n]t_n=\beta}\Theta_\mathfrak{a}(-t_n)\zeta_{\ell^n}^{\otimes j}\right)_n.$$
Since there is(see \cite[Lemma3.2.4]{J})
\[ \prod_{\mathfrak{l}|\ell}(1-\mbox{Fr}_\mathfrak{l}^{-1})^{-1}\left(\sum_{[\ell^n]t_n=\Omega f_{\mathfrak{m}}^{-1}}\Theta_\mathfrak{a}(-t_n)\zeta_{\ell^n}^{\otimes j}\right)_n=w_{\mathfrak{m}}(\mbox{Tr}_{K(\ell^n\mathfrak{m})/K(\mathfrak{m})}\Theta_\mathfrak{a}(-s_n)\zeta_{\ell^n}^{\otimes j})_n, \]
where $s_n$ is a primitive $\ell^n$th root of $\beta$, we obtain the following theorem (see \cite[Theorem3.2.1]{J})
\begin{theorem}\label{thm:l-adic-regulator}
  For all $1\neq \mathfrak{m}|\mathfrak{f}$, we have that
  \[ \rho_{et}(\xi_\mathfrak{m}(j))=\frac{\mathcal{N}\mathfrak{m}^{j-1}w_\mathfrak{m}}{(\mathcal{N}\mathfrak{a}-\sigma_\mathfrak{a})
  \prod_{\mathfrak{l}|\ell}(1-\mbox{Fr}_\mathfrak{l}^{-1})(2j)!}
  (\mbox{Tr}_{K(\ell^n\mathfrak{m})/K(\mathfrak{m})}\Theta_\mathfrak{a}(-s_n)\zeta_{\ell^n}^{\otimes j})_n
  \]
\end{theorem}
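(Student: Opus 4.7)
The plan is to chase $\xi_\mathfrak{m}(j)$ through the commutative Kronecker square above and unwind the explicit formulas already collected in the preceding paragraphs. The proof is essentially a bookkeeping exercise combining three ingredients: the elliptic polylogarithm formula of Kings, the action of $\mathcal{K}_\ell$ on Tate modules and CM isogenies, and Johnson's trace-sum identity at primes above $\ell$.

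First, I would use commutativity of the diagram with the étale regulator $\rho_{et}$ to write
$$\rho_{et}(\xi_\mathfrak{m}(j)) = \mathcal{K}_\ell\bigl(\rho_{et}(\mathcal{E}is^{2j}(\rho_\mathfrak{m}\cdot\beta))\bigr),$$
then invoke \cite[Theorem 2.2.4]{HK99} to rewrite the étale realization of the Eisenstein symbol as the pullback $(\beta^*\mathcal{P}ol_{\mathbb{Q}_\ell})^{2j}$ of the elliptic polylogarithm along the torsion section, scaled by the Serre-Tate factor $\psi(\rho_\mathfrak{m})^{-1}\mathcal{N}\mathfrak{m}^{2j-1}$ recorded in the text.

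Second, I would substitute Kings' explicit formula for $(\beta^*\mathcal{P}ol_{\mathbb{Q}_\ell})^{2j}$ and apply the Kronecker map term by term, using the three identities listed in the text: $\mathcal{K}_\ell(\tilde{t}_n^{\otimes 2j})=\zeta_{\ell^n}^{\otimes j}$ via the Weil pairing with $\sqrt{d_K}t$, $\mathcal{K}_\ell([\mathfrak{a}]^{2j})=\mathcal{N}\mathfrak{a}^j$, and $\mathcal{K}_\ell(\rho_\mathfrak{m}^{-2j})=\mathcal{N}\mathfrak{m}^{-j}$. Combined with the Galois action of $\sigma_\mathfrak{a}$ by $\mathcal{N}\mathfrak{a}^{j+2}$ on $H^1(K(\mathfrak{f}),\mathbb{Q}_\ell(j+1))$, the operator $\mathcal{N}\mathfrak{a}([\mathfrak{a}]^{2j}\mathcal{N}\mathfrak{a}-1)$ in Kings' denominator becomes $\mathcal{N}\mathfrak{a}-\sigma_\mathfrak{a}$ on the target. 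Collecting powers of $\mathcal{N}\mathfrak{m}$ and $\mathcal{N}\mathfrak{a}$ and absorbing the character $\psi(\rho_\mathfrak{m})^{-1}$ into the reindexing yields Johnson's intermediate formula
$$\rho_{et}(\xi_\mathfrak{m}(j))=\frac{\mathcal{N}\mathfrak{m}^{j-1}}{(\mathcal{N}\mathfrak{a}-\sigma_\mathfrak{a})(2j)!}\Bigl(\delta\sum_{[\ell^n]t_n=\beta}\Theta_\mathfrak{a}(-t_n)\zeta_{\ell^n}^{\otimes j}\Bigr)_n.$$

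Third, I would apply Johnson's Lemma 3.2.4 to exchange the unrestricted sum over preimages $[\ell^n]t_n=\beta$ for a trace $\mbox{Tr}_{K(\ell^n\mathfrak{m})/K(\mathfrak{m})}$ evaluated at a single primitive $\ell^n$th root $s_n$ of $\beta$; this picks up the weight $w_\mathfrak{m}$ in the numerator and the Euler factor $\prod_{\mathfrak{l}\mid\ell}(1-\mbox{Fr}_\mathfrak{l}^{-1})$ in the denominator, giving exactly the claimed formula. The main obstacle is the delicate tracking of Tate twists in the second step: one must verify that the CM action of $\mathcal{O}_K$ on the Tate module translates via the reciprocity map into the Galois action so that Kings' operator $\mathcal{N}\mathfrak{a}([\mathfrak{a}]^{2j}\mathcal{N}\mathfrak{a}-1)$ is intertwined with $\mathcal{N}\mathfrak{a}-\sigma_\mathfrak{a}$ on the target $H^1(K(\mathfrak{f}),\mathbb{Q}_\ell(j+1))$, which is precisely where the integrality and $\ell\nmid\mathfrak{f}$ hypotheses enter.
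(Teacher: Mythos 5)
Your proposal follows exactly the route the paper takes (the paper presents this derivation as the discussion preceding the theorem rather than as a formal proof): commutativity of the Kronecker square together with Huber--Kings to reduce to the pullback of the elliptic polylogarithm, Kings' explicit formula combined with the action of $\mathcal{K}_\ell$ and of $\sigma_\mathfrak{a}$ to obtain the intermediate expression with $\mathcal{N}\mathfrak{a}-\sigma_\mathfrak{a}$ in the denominator, and Johnson's Lemma 3.2.4 to convert the sum over $\ell^n$-division points into the trace term with the factors $w_\mathfrak{m}$ and $\prod_{\mathfrak{l}\mid\ell}(1-\mathrm{Fr}_\mathfrak{l}^{-1})$. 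The only discrepancies (the fate of $\psi(\rho_\mathfrak{m})^{-1}$ and the placement of $(2j)!$) are present in the paper's own bookkeeping as well, so the proposal is correct and essentially identical in approach.
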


\section{``Main conjecture'' of Iwasawa theory for quadratic imaginary fields}

Fix an elliptic curve $E$ defined over imaginary quadratic field $K$,  with $\text{End}_K(E)=\mathcal{O}_K$. Fix a prime $\mathfrak{p}$ of $K$ where $E$
has good reduction. Write $K_n=K(E[\mathfrak{p}^n]), n=0,1,2,\cdots, \infty$. We have
$$\text{Gal}(K_\infty/K)\cong\mathcal{O}_\mathfrak{p}^\times\cong\text{Gal}(K_\infty/K_1)\times\text{Gal}(K_1/K)$$
where
$$\text{Gal}(K_1/K)\cong(\mathcal{O}/\mathfrak{p})^\times$$
is the prime-to-$p$ part of $\text{Gal}(K_\infty/K)$ and
$$\text{Gal}(K_\infty/K_1)\cong 1+\mathfrak{p}\mathcal{O}_\mathfrak{p}\cong\mathbb{Z}_p^{[K_\mathfrak{p}:\mathbb{Q}_p]}$$
is the $p$-part.

 Let $F=K(\mathfrak{f}), (\mathfrak{f,p})=1,$ and $F_n=K(\mathfrak{fp}^n)\supset F(\mu_{p^n}), n=0,\cdots,\infty$. Write $\mathscr{G}_F=\text{Gal}(F_\infty/F)\subset \mathscr{G}_\infty=\text{Gal}(F_\infty/K), \Delta=\text{Gal}(F_1/K),$  $ \Gamma=\text{Gal}(F_\infty/F_1)$.
Let $\mathscr{A}_n=p$-part of the class group of $F_n$,  $\mathscr{A}'_n=p$-part of the $S$-class group of $F_n$, $\mathscr{A}_\infty=\varprojlim \mathscr{A}_n, \mathscr{A}'_\infty=\varprojlim \mathscr{A}'_n$. Put $\mathscr{X}_n=G_S^{\text{ab}}(K_n)\otimes \mathbb{Z}_p$, $\mathscr{X}_\infty=\varprojlim \mathscr{X}_n.$ Write $U'(F_n)=$group of $S$-units of $F_n$, $\overline{U}'_n=U'_n\otimes\mathbb{Z}_p,$ $\overline{U}'_\infty=\varprojlim \overline{U}'_n, \tilde{U}'_n=\text{fr}_{\mathbb{Z}_p}\overline{U}'_n, \tilde{U}'_\infty=\text{fr}_{\Lambda}\overline{U}'_\infty$.

By \cite[Corollary5.20]{R1}, we have
\begin{equation}\label{equ:6-1}
  \Gamma=\text{Gal}(F_\infty/F_1)=\varprojlim \text{Gal}(F_n/F_1)\cong \varprojlim \text{Gal}(K_n/K_1).
\end{equation}

For prime $\mathfrak{p}$ of $K$ over $p$, suppose $p\nmid [F_1:K]$ and $p$ does not divide the number of roots of unity in the Hilbert class field of $K$. From (\ref{equ:6-1}), we have $ \Gamma=\text{Gal}(F_\infty/F_1)\cong \mathbb{Z}_p$ or $\mathbb{Z}_p^2$.  Then $\mathscr{G}_\infty=\text{Gal}(F_\infty/K)\cong\Delta\times\Gamma$. Write $\mathscr{A}(F_n)$ for the $p$-part of the ideal class group of $F_n$, $\mathscr{E}(F_n)$ for the group of global units of $F_n$, and $\mathscr{C}(F_n)$ for the group of Robert elliptic units. Let $\mathscr{U}(F_n)$ be the group of local units of $F_n\otimes_K K_\mathfrak{p}$ which are congruent to 1 modulo the primes above $\mathfrak{p}$. Let $\overline{\mathscr{E}}(F_n)$ and $\overline{\mathscr{C}}(F_n)$ denote the closure of $\mathscr{E}(F_n)\cap \mathscr{U}(F_n)$ and $\mathscr{C}(F_n)\cap \mathscr{U}(F_n)$ in
$\mathscr{U}(F_n)$. For $F_\infty$, we define
\begin{align*}
   & \mathscr{A}(F_\infty)=\varprojlim \mathscr{A}(F_n), \ \ \overline{\mathscr{E}}(F_\infty)=\varprojlim \overline{\mathscr{E}}(F_n),\\
   & \overline{\mathscr{C}}(F_\infty)=\varprojlim \overline{\mathscr{C}}(F_n),\  \ \mathscr{U}(F_\infty)=\varprojlim \mathscr{U}(F_n),
\end{align*}
with respect to norm maps.    Let $M(F_n)$ for the maximal abelian $p$-extension of $F_n$ which is unramified outside the primes above $\mathfrak{p}$, and write $\mathscr{X}(F_n)=\text{Gal}(M(F_n)/F_n)$. We will also write $\mathscr{A}_\infty,\ \mathscr{U}_\infty, \overline{\mathscr{E}}_\infty,\ \overline{\mathscr{C}}_\infty$ and
$\mathscr{X}_\infty$ for $\mathscr{A}(F_\infty),\ \mathscr{U}(F_\infty), \overline{\mathscr{E}}(F_\infty),\ \overline{\mathscr{C}}(F_\infty)$ and $\mathscr{X}(F_\infty)$, respectively. Then global class field theory gives us the following exact sequence(The details can be found in\cite{dS}Chapter3 \S1.7)
\begin{equation}\label{equ:exact sequ}
  0\to \overline{\mathscr{E}}_\infty/\overline{\mathscr{C}}_\infty \to \mathscr{U}_\infty/ \overline{\mathscr{C}}_\infty\to \mathscr{X}_\infty \to \mathscr{A}_\infty \to 0.
\end{equation}

If $Y$ is a $\mathbb{Z}_p[\Delta]$-module and $\chi$ is an irreducible $\mathbb{Z}_p$-representation of $\Delta$, define $Y^\chi$ to be the $\chi$-isotypic component of $Y$ and
$$e_\chi=\frac{1}{\#\Delta}\sum_{\tau\in\Delta}\text{Tr}(\chi(\tau))\tau^{-1}\in \mathbb{Z}_p[\Delta].$$
Then $Y^\chi=e_\chi Y$.

Define the Iwasawa algebra
$$\Lambda=\mathbb{Z}_p[[\mathscr{G}_\infty]]=\varprojlim \mathbb{Z}_p[\text{Gal}(F_n/K)].$$
Then for every irreducible $\mathbb{Z}_p$-representation $\chi$ of $\Delta$,
$$\Lambda^\chi= \mathbb{Z}_p[[\mathscr{G}_\infty]]^\chi=R_\chi[[\Gamma]],$$
where $R_\chi$ is the ring of integers of the unramified extension of $\mathbb{Q}_p$ of degree dim($\chi$). Further, $R_\chi[[\Gamma]]$ is (noncanonically) isomorphic to a power series rings in 1 or 2 variables over $R_\chi$ according to $\Gamma\cong \mathbb{Z}_p$ or $\mathbb{Z}_p^2.$

We have $\Lambda=\bigoplus_\chi \Lambda^\chi.$ We will call a $\Lambda$-module $Y$ a torsion if every element of $Y$ is annihilated by an element of $\Lambda$ which is  not a zero-divisor, or equivalently if and only if $Y^\chi$ is a torsion $\Lambda^\chi$-module for every $\chi$.

A finitely generated $\Lambda$-module is called pseudo-null if it is annihilated by an ideal of height 2; if $\Gamma\cong\mathbb{Z}_p$ the pseudo-null
modules are exactly the finite modules. A pseudo-isomorphism of $\Lambda$-modules is a map with pseudo-null kernel and cokernel. It follows from the well known classification theorem for $\Lambda$-modules that for every finitely generated torsion $\Lambda$-module $Y$ can find elements $f_i$ of
$\Lambda$ and pseudo-isomorphisms
$$Y\to \oplus \Lambda/f_i\Lambda \quad \text{and} \quad \oplus \Lambda/f_i\Lambda \to Y.$$
The characteristic ideal $(\Pi f_i)/\Lambda$ is a well-defined invariant of $Y$ which we will denote by char$(Y)$. For every $\chi$, char$(Y)^\chi\subset \Lambda^\chi$ is the usual character ideal of the $\Lambda^\chi$-module $Y^\chi$, and a generator  of char$(Y)^\chi$ is usually called a characteristic power series of $Y^\chi$.

Rubin proves that $\mathscr{A}_\infty,\ \mathscr{U}_\infty, \overline{\mathscr{E}}_\infty,\ \overline{\mathscr{C}}_\infty$ and $\mathscr{X}_\infty$ are all finitely generated $\Lambda$-modules, and $\mathscr{A}_\infty$ and $\overline{\mathscr{E}}_\infty/\overline{\mathscr{C}}_\infty$ are torsion $\Lambda$-modules in \cite{R}. Further,
he got the following main conjecture of Iwasawa theory for imaginary quadratic fields, that is,
\begin{theorem}\label{thm:Imagi-quad-conj-of-Iwasa}
 (i) Suppose $p$ splits into two distinct primes in $K$. Then
  $$ char(\mathscr{A}_\infty)=char(\overline{\mathscr{E}}_\infty/\overline{\mathscr{C}}_\infty) \quad \text{and} \quad
  char(\mathscr{X}_\infty)=char(\mathscr{U}_\infty/\overline{\mathscr{C}}_\infty)$$

 (ii) Suppose $p$ remains prime or ramifies in $K$. Then
 $$  char(\mathscr{A}_\infty)\quad \text{divides} \quad char(\overline{\mathscr{E}}_\infty/\overline{\mathscr{C}}_\infty).$$
 If $\chi$ is an irreducible $\mathbb{Z}_p$-representation of $\Delta$ which is nontrivial on the decomposition group of $\mathfrak{p}$ in $\Delta$,
 then
 $$char(\mathscr{A}_\infty)^\chi=char(\overline{\mathscr{E}}_\infty/\overline{\mathscr{C}}_\infty)^\chi.$$
\end{theorem}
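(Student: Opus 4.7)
The plan is to establish the theorem via the Euler system machinery of Kolyvagin applied to the elliptic units $\{{}_{\mathfrak{a}}z_{\mathfrak{f}}\}$ introduced in Section~4, which yields the divisibility direction; the reverse divisibility (where available) is obtained from the two-variable $p$-adic $L$-function and the analytic class number formula for the tower $F_\infty/F$.

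First, I would verify that the elliptic units form an Euler system in the sense of Kolyvagin--Rubin: the norm-compatibility of Lemma~4.2(iv) together with the Galois-action identity Lemma~4.2(iii) furnish exactly the distribution and congruence relations the formalism requires, and the rationality/integrality in Lemma~4.2(i)--(ii) place the classes in the correct cohomology groups. Applying the Kolyvagin derivative to this system produces, for each $\mathbb{Z}_p$-irreducible character $\chi$ of $\Delta$, a family of cohomology classes whose local ramification is controlled by auxiliary primes $\mathfrak{q}$ of $K$, and a global duality argument together with Chebotarev density then gives the bound
\[ \mathrm{char}(\mathscr{A}_\infty)^\chi \ \bigl|\ \mathrm{char}\bigl(\overline{\mathscr{E}}_\infty/\overline{\mathscr{C}}_\infty\bigr)^\chi. \]
This already proves the first divisibility in (ii), and one direction of the first equality in (i).

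Second, to upgrade the divisibility to equality when $p$ splits, I would invoke de~Shalit's explicit reciprocity law to identify a generator of $\mathrm{char}(\mathscr{U}_\infty/\overline{\mathscr{C}}_\infty)^\chi$ with a branch of the two-variable Katz $p$-adic $L$-function attached to $\chi$, and then compare with the analytic class number formula for the tower $F_\infty/F$; this forces both inequalities to be equalities. From multiplicativity of characteristic ideals applied to the four-term exact sequence (\ref{equ:exact sequ}), one then deduces $\mathrm{char}(\mathscr{X}_\infty)=\mathrm{char}(\mathscr{U}_\infty/\overline{\mathscr{C}}_\infty)$, giving the second half of (i). For the non-split case of (ii), restricting to characters $\chi$ that are nontrivial on the decomposition group of $\mathfrak{p}$ in $\Delta$ kills the inert-local obstruction, and the same analytic input yields equality on these isotypic pieces.

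The main obstacle will be the non-split case. When $p$ is inert or ramified, the local behavior at $\mathfrak{p}$ of the derived Euler system classes is rigid only on those $\chi$-components on which the decomposition group of $\mathfrak{p}$ acts nontrivially; for the remaining characters the semilocal unit module $\mathscr{U}_\infty$ has the wrong rank to run the comparison, and the two-variable $p$-adic $L$-function does not pin down the characteristic ideal of $\overline{\mathscr{E}}_\infty/\overline{\mathscr{C}}_\infty$ on these components. Choosing the auxiliary Kolyvagin primes $\mathfrak{q}$ whose Frobenii simultaneously avoid the relevant subgroups of $\mathrm{Gal}(F_\infty(\mu_{p^\infty})/K)$ is therefore the principal bookkeeping obstacle, and is precisely what limits assertion (ii) to a divisibility in general and to equality only on the isotypic components indicated.
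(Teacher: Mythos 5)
This statement is not proved in the paper: it is Rubin's main conjecture for imaginary quadratic fields, quoted directly from \cite{R} (the surrounding text reads ``Further, he got the following main conjecture of Iwasawa theory\dots''), so there is no in-paper argument to compare yours against. Taken as a summary of Rubin's strategy, your outline identifies the right two pillars: the Euler system of elliptic units (Kolyvagin derivatives, Chebotarev, global duality) gives the divisibility $\mathrm{char}(\mathscr{A}_\infty)^\chi \mid \mathrm{char}(\overline{\mathscr{E}}_\infty/\overline{\mathscr{C}}_\infty)^\chi$, an analytic input reverses it in the split case, the second equality in (i) then follows from multiplicativity of characteristic ideals along the four-term sequence \eqref{equ:exact sequ}, and your diagnosis of why the inert/ramified case survives only on components where $\chi$ is nontrivial on the decomposition group (the rank of the semilocal unit module at $\mathfrak{p}$) is the correct one.

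Taken as a proof, however, the proposal delegates every substantive step to a black box, and the one step that is not merely deferred but actually under-justified is ``compare with the analytic class number formula for the tower $F_\infty/F$; this forces both inequalities to be equalities.'' De Shalit's identification of $\mathrm{char}(\mathscr{U}_\infty/\overline{\mathscr{C}}_\infty)^\chi$ with the Katz $p$-adic $L$-function does not by itself produce the missing divisibility, and in the split case $\Lambda^\chi$ is a power series ring in \emph{two} variables, so the one-variable device of matching $\lambda$- and $\mu$-invariants against the growth of class numbers (which is what ``the analytic class number formula for the tower'' amounts to, via Robert's index formula $[\mathscr{E}(F_n):\mathscr{C}(F_n)]\doteq h(F_n)$) does not formally upgrade a divisibility of ideals of $R_\chi[[T_1,T_2]]$ to an equality; Rubin needs a further descent/specialization argument to one-variable quotients together with control theorems to close exactly this gap. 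A smaller imprecision: the Euler system formalism needs congruence (``derivative'') relations at the auxiliary primes $\mathfrak{q}$ in addition to the norm relations, and these are not literally Lemma 4.2(iii) but come from the explicit theta-function construction. If your intent is to justify citing the theorem, the sketch is adequate; if it is to reprove it, these points must be supplied.
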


\section{\'{e}tale cohomology and Iwasawa module}

For an odd prime number $p$, let $S=S(F)$ denote the set of primes of $F$ above $p$. Denote the maximal algebraic $S$-ramified extension of $F$ by $\Omega_S$ and set $G_S(F)=\text{Gal}(\Omega_S/F)$. It is well known that the $p$-adic cohomology group $H^i(G_S(F),\mathbb{Z}_p(m))$$=\varprojlim H^i(G_S(F),\mathbb{Z}/p^n\mathbb{Z}(m)),\ m\in\mathbb{Z}$, coincide with the \'{e}tale cohomology groups $H^i_{et}(\text{Spec}\mathcal{O}_F[1/p],\mathbb{Z}_p(m))$. Put $H^1(\mathcal{O}_F[1/p],\mathbb{Q}_p(m)):=H^1_{et}(\text{Spec}\mathcal{O}_F[1/p],\mathbb{Q}_p(m))$.

There is a unified description of the groups $H^1(\mathcal{O}_F[1/p],\mathbb{Q}_p(m))$ in terms of Iwasawa modules which goes back to Deligne\cite{Del} and Soul\'{e}\cite{Sou1, Sou2}. Let $E_n=F(\mu_{p^n}),\ E_\infty=F(\mu_{p^\infty}),\ G_\infty=\text{Gal}(E_\infty/F),\ U'_n=U'_{E_n},$ and $\overline{U}'_\infty=\varprojlim U'_n\otimes \mathbb{Z}_p.$ In \cite{Sou1,Sou2}, Soul\'{e} proved that $(\overline{U}'_\infty(m-1))_{G_\infty}\otimes\mathbb{Q}_p\cong K_{2m-1}(F)\otimes\mathbb{Q}_p\cong H^1(\mathcal{O}_F[1/p],\mathbb{Q}_p(m))$ for $m\geq2.$ Kolster et.al. (see \cite{KNF}) extended Soul\'{e}'s results for all $m\in \mathbb{Z}, m\neq1$, modulo the standard conjecture.

The descent modules $\overline{U}'_\infty(m-1)_{G_\infty}$ is very useful. It has been shown by Kolster et.al.  for $m\neq0,1$ and by Kuz'min\cite{Ku} for $m=1$ that $\overline{U}'_\infty(m-1)_{G_\infty}$ injects into $H^1(\mathcal{O}_F[1/p],\mathbb{Z}_p(m))$. Assume that $F$ is abelian field.  Consider the projective limit of Sinnott's circular $S$-units $C'_n\subset U'_n$. Let $\overline{C}'_\infty=\varprojlim C'_n\otimes\mathbb{Z}_p.$ Again, Kolster et.al. proved $\overline{C}'_\infty(m-1)_{G_\infty}$ injects into $H^1(\mathcal{O}_F[1/p],\mathbb{Z}_p(m))$ with finite index. If $F$ is an abelian extension of imaginary quadratic field, we use the elliptic units to get a similar result as them.

One essential tool used is Poitou-Tate's long exact sequence, which gives a description of the kernels and cokernels of localization maps in Galois or \'{e}tale cohomology. Let us recall the main facts to be used. Denote by \font\fontWCA=wncyr10 {\fontWCA SH}$_S^i(F,\mathbb{Q}_p/\mathbb{Z}_p(m))$ and
\font\fontWCA=wncyr10 {\fontWCA SH}$_S^i(F,\mathbb{Z}_p(m))$ respectively the kernels of the localization maps
\[ H^i(\mathcal{O}_F[1/p],\mathbb{Q}_p/\mathbb{Z}_p(m))\to \bigoplus_{v\in S}H^i(F_v,\mathbb{Q}_p/\mathbb{Z}_p(m)) \]
and
\[ H^i(\mathcal{O}_F[1/p],\mathbb{Z}_p(m))\to \bigoplus_{v\in S}H^i(F_v,\mathbb{Z}_p(m)). \]
By Poitou-Tate, we have canonical isomorphisms(for any $m\in \mathbb{Z}$)
\[ H^i(F_v,\mathbb{Z}_p(m))\cong H^j(F_v,\mathbb{Q}_p/\mathbb{Z}_p(1-m))^* \quad (0\leq i,j\leq2,\ i+j=2) \]
and
\[ \text{\font\fontWCA=wncyr10 {\fontWCA SH}}_S^i(F,\mathbb{Z}_p(m)) \cong \text{\font\fontWCA=wncyr10 {\fontWCA SH}}_S^i(F,\mathbb{Q}_p/\mathbb{Z}_p(1-m))^* \quad (0\leq i,j\leq2,\ i+j=3),\]
where $(\cdot)^*$ denotes the Pontryagin dual.

We will use Poitou-Tate's exact sequence in the following form
\begin{multline*}
  0\to \text{\font\fontWCA=wncyr10 {\fontWCA SH}}_S^2(F,\mathbb{Q}_p/\mathbb{Z}_p(1-m))^* \to H^1(\mathcal{O}_F[1/p],\mathbb{Z}_p(m))\to \bigoplus_{v\in S}H^1(F_v,\mathbb{Z}_p(m)) \\
  \cong \bigoplus_{v\in S}H^1(F_v,\mathbb{Q}_p/\mathbb{Z}_p(1-m))^* \to H^1(\mathcal{O}_F[1/p],\mathbb{Q}_p/\mathbb{Z}_p(1-m))^* \to \text{\font\fontWCA=wncyr10 {\fontWCA SH}}_S^2(F,\mathbb{Z}_p(m)) \to 0.
\end{multline*}

Note that for $m=1$, this is essentially the class field theory exact sequence. It is known that $H^2(F_v,\mathbb{Q}_p/\mathbb{Z}_p(m))=0 $ for all $m\neq1$, so
\[ \text{\font\fontWCA=wncyr10 {\fontWCA SH}}_S^2(F,\mathbb{Q}_p/\mathbb{Z}_p(m))=H^2(\mathcal{O}_F[1/p],\mathbb{Q}_p/\mathbb{Z}_p(m)) \]
for all $m\neq1$. If $m\geq2$, then $H^2(\mathcal{O}_F[1/p],\mathbb{Q}_p/\mathbb{Z}_p(m))=0$, which is essentially equivalent to the finiteness of $K_{2m-2}(\mathcal{O}_F).$

We will use Iwasawa theory to describe the groups $H^1(\mathcal{O}_F[1/p],\mathbb{Z}_p(m))$ as modules of descent. In the process of descent, we will use the following generalized Tate's lemma.

\begin{lemma}\label{lem:Tate}
  ( Tate's lemma.) \  For any $m\neq0$, $H^1(\mathscr{G}_F, \mathbb{Q}_p/\mathbb{Z}_p(m))=0.$ \textcolor{red}{\textbf{???}}
\end{lemma}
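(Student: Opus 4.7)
The plan is to exploit that $\mathscr{G}_F=\mathrm{Gal}(F(\mu_{p^\infty})/F)$ acts on $\mathbb{Q}_p/\mathbb{Z}_p(m)$ through the cyclotomic character $\chi\colon \mathscr{G}_F\hookrightarrow \mathbb{Z}_p^\times$ (which is injective, with open image), together with the $p$-divisibility of $\mathbb{Q}_p/\mathbb{Z}_p$.

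First I would split off the pro-$p$ part. Since $p$ is odd, $\mathbb{Z}_p^\times=\mu_{p-1}\times (1+p\mathbb{Z}_p)$, so the pro-$p$ Sylow subgroup $H\subseteq \mathscr{G}_F$ is mapped by $\chi$ isomorphically onto an open subgroup of $1+p\mathbb{Z}_p$; in particular $H\cong \mathbb{Z}_p$ and $\mathscr{G}_F/H$ is finite of order dividing $p-1$. Pick a topological generator $\gamma$ of $H$; because $1+p\mathbb{Z}_p$ is torsion-free, $\chi(\gamma)\in 1+p\mathbb{Z}_p$ has infinite multiplicative order, and therefore $\chi(\gamma)^m-1\in \mathbb{Z}_p\setminus\{0\}$ for every nonzero integer $m$.

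Next, since $H\cong \mathbb{Z}_p$ is procyclic, the continuous cohomology of a discrete module is given by $H^1(H,M)=M/(\gamma-1)M$. On $M=\mathbb{Q}_p/\mathbb{Z}_p(m)$ the operator $\gamma-1$ is multiplication by the scalar $\chi(\gamma)^m-1$, and multiplication by any nonzero element of $\mathbb{Z}_p$ is surjective on the divisible group $\mathbb{Q}_p/\mathbb{Z}_p$. Hence
\[ H^1(H,\mathbb{Q}_p/\mathbb{Z}_p(m))=0. \]

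Finally I would descend to $\mathscr{G}_F$ via the five-term inflation--restriction sequence attached to $1\to H\to \mathscr{G}_F\to \mathscr{G}_F/H\to 1$,
\[ 0\to H^1(\mathscr{G}_F/H,M^H)\to H^1(\mathscr{G}_F,M)\to H^1(H,M)^{\mathscr{G}_F/H}=0. \]
Since $\mathscr{G}_F/H$ is finite of order prime to $p$ while $M^H\subseteq \mathbb{Q}_p/\mathbb{Z}_p(m)$ is $p$-primary torsion, the group order is invertible on the coefficient module and $H^1(\mathscr{G}_F/H,M^H)$ vanishes, giving $H^1(\mathscr{G}_F,\mathbb{Q}_p/\mathbb{Z}_p(m))=0$. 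The main conceptual point to be careful about is really just in Step~1: one needs $\mathscr{G}_F$ to be interpreted as the cyclotomic Galois group $\mathrm{Gal}(F(\mu_{p^\infty})/F)$ appearing in the Soul\'e description above, because on any anticyclotomic direction the cyclotomic character is trivial and $\gamma-1$ would cease to act by a nonzero scalar.
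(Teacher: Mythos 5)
Your argument is a correct and complete proof of the classical cyclotomic Tate lemma, i.e.\ of the statement for $G_\infty=\mathrm{Gal}(F(\mu_{p^\infty})/F)$, and in that setting it is essentially the proof of \cite[Lemma 2.1]{KNF} that the paper itself invokes for the rank-one case. The difficulty --- which your closing caveat correctly anticipates --- is that in this paper $\mathscr{G}_F$ is \emph{not} the cyclotomic Galois group: it is $\mathrm{Gal}(F_\infty/F)$ with $F_\infty=K(\mathfrak{f}\mathfrak{p}^\infty)$, and when $p$ is inert or ramified in $K$ one has $\mathscr{G}_F\cong\mathbb{Z}_p^2\times\Delta$. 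In that case the cyclotomic character cannot have infinite order on every $\mathbb{Z}_p$-direction (its image lies in the rank-one group $\mathbb{Z}_p^\times$), so your Step 1 fails: the subgroup $H=\mathrm{Gal}(F_\infty/F(\mu_{p^\infty}))\cong\mathbb{Z}_p$ acts trivially on $\mathbb{Q}_p/\mathbb{Z}_p(m)$, and $H^1(H,\mathbb{Q}_p/\mathbb{Z}_p(m))=\mathrm{Hom}(\mathbb{Z}_p,\mathbb{Q}_p/\mathbb{Z}_p)\neq0$. Your proof therefore establishes the lemma only when $\mathscr{G}_F$ has $\mathbb{Z}_p$-rank one (which does include the split case: there the cyclotomic character still has open image on the pro-$p$ part of $\mathscr{G}_F$ even though $F_\infty\neq F(\mu_{p^\infty})$, so your computation of $H^1(H,-)$ goes through).

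You should know that the paper's own treatment of the rank-two case stumbles at exactly the point you flagged: it passes to $L=F(\mu_{p^\infty})$ and asserts $H^1(H,\mathbb{Q}_p/\mathbb{Z}_p(m))=0$ for this anticyclotomic $H$, which is false for the reason just given (hence, presumably, the author's red question marks). What inflation--restriction actually yields there is
\[ H^1(\mathscr{G}_F,\mathbb{Q}_p/\mathbb{Z}_p(m))\;\cong\;\mathrm{Hom}\bigl(H,\mathbb{Q}_p/\mathbb{Z}_p(m)\bigr)^{\mathrm{Gal}(L/F)}\;\cong\;H^0(F,\mathbb{Q}_p/\mathbb{Z}_p(m)), \]
which is finite but vanishes only when $H^0(F,\mathbb{Q}_p/\mathbb{Z}_p(m))=0$. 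So in the inert/ramified case the honest conclusion is finiteness, not vanishing; to repair matters one must either restrict the lemma to the split case, or weaken it to ``$H^1(\mathscr{G}_F,\mathbb{Q}_p/\mathbb{Z}_p(m))$ is finite of order $\#H^0(F,\mathbb{Q}_p/\mathbb{Z}_p(m))$'' and track the resulting finite error term through Theorem~\ref{thm:etale-iwasawa} and its consequences.
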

\begin{proof}
   If $\mathscr{G}_F\cong\mathbb{Z}_p\times \Delta$, the lemma is the case \cite[Lemma2.1]{KNF}.

   Suppose $\mathscr{G}_F\cong\mathbb{Z}_p^2\times \Delta$.  Let $L=F(\mu_{p^\infty})$. Then $\text{Gal}(L/F)\cong\mathbb{Z}_p\times\Delta'$  and $F_\infty/L$ is $\mathbb{Z}_p$-extension. Write $H=\text{Gal}(F_\infty/L)$.
  Hence there are
  $$H^1(\text{Gal}(L/F), (\mathbb{Q}_p/\mathbb{Z}_p(m))^H)=H^1(\text{Gal}(L/F), \mathbb{Q}_p/\mathbb{Z}_p(m))=0$$
   and
  $H^1(H, \mathbb{Q}_p/\mathbb{Z}_p(m))=0.$
  The cohomology exact sequence
  $$0\to H^1(\text{Gal}(L/F), (\mathbb{Q}_p/\mathbb{Z}_p(m))^H)\xrightarrow{Inf} H^1(\mathscr{G}_F, \mathbb{Q}_p/\mathbb{Z}_p(m))\xrightarrow{Res}
  H^1(H, \mathbb{Q}_p/\mathbb{Z}_p(m))$$
  shows that $H^1(\mathscr{G}_F, \mathbb{Q}_p/\mathbb{Z}_p(m))=0.$
\end{proof}

\begin{theorem}\label{thm:etale-iwasawa}
  For $m\neq1$, we have canonical isomorphisms \textcolor{red}{\textbf{???}}
  $$ H^1(\mathcal{O}_{F}[1/p],\mathbb{Q}_p/\mathbb{Z}_p(1-m))^*\cong \mathscr{X}_\infty(m-1)_{\mathscr{G}_F} , $$
  $$ H^2(\mathcal{O}_{F}[1/p],\mathbb{Q}_p/\mathbb{Z}_p(1-m))^*
\cong \mathscr{X}_\infty(m-1)^{\mathscr{G}_F}$$
\end{theorem}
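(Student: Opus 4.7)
The plan is to combine a class-field-theoretic identification of $\mathscr{X}_\infty$ with the Hochschild--Serre spectral sequence for $F_\infty/F$, using the Tate lemma just proved as the essential vanishing input.

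The first step is to identify $H^1(G_S(F_\infty),M)^{*}\cong\mathscr{X}_\infty(m-1)$ as $\mathscr{G}_F$-modules, where $M=\mathbb{Q}_p/\mathbb{Z}_p(1-m)$. Since $F_n\supset F(\mu_{p^n})$ for every $n$, the group $G_S(F_\infty)$ acts trivially on $M$, hence $H^1(G_S(F_\infty),M)=\mathrm{Hom}_{\mathrm{cont}}(\mathscr{X}_\infty,M)$ by class field theory, and tracking the Tate twist through Pontryagin duality gives the claimed $\mathscr{G}_F$-equivariant identification. I then feed this into the Hochschild--Serre spectral sequence $E_2^{p,q}=H^p(\mathscr{G}_F,H^q(G_S(F_\infty),M))\Rightarrow H^{p+q}(G_S(F),M)$. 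For the first isomorphism, the five-term sequence combined with Lemma 6.1 ($H^1(\mathscr{G}_F,M)=0$) and the companion vanishing $H^2(\mathscr{G}_F,M)=0$ for $m\ne 1$ (immediate from the Koszul complex, since some generator of $\mathscr{G}_F$ acts on the divisible module $M$ as a nonzero scalar in $\mathbb{Z}_p$ and hence surjectively) gives $H^1(G_S(F),M)\cong H^1(G_S(F_\infty),M)^{\mathscr{G}_F}$, after which the duality $(V^{\mathscr{G}_F})^{*}\cong(V^{*})_{\mathscr{G}_F}$ produces $\mathscr{X}_\infty(m-1)_{\mathscr{G}_F}$.

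For the $H^2$ statement I push the spectral sequence one degree further, aiming to show $E_\infty^{0,2}=E_\infty^{2,0}=0$ so that $H^2(G_S(F),M)\cong E_\infty^{1,1}=H^1(\mathscr{G}_F,H^1(G_S(F_\infty),M))$. The term $E_\infty^{2,0}$ vanishes as above; $E_\infty^{0,2}$ vanishes once we know $H^2(G_S(F_\infty),M)=0$, which follows from the Quillen--Lichtenbaum theorem at each finite level $F_n$ together with the fact that Galois cohomology of a discrete module commutes with direct limits. When $p$ splits in $K$ and $\mathscr{G}_F\cong\mathbb{Z}_p$, the equality $H^1(\mathbb{Z}_p,V)=V_{\mathscr{G}_F}$ completes the descent, and Pontryagin dualising produces $\mathscr{X}_\infty(m-1)^{\mathscr{G}_F}$ as required.

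The main obstacle is the non-split case $\mathscr{G}_F\cong\mathbb{Z}_p^2$ (when $p$ is inert or ramifies in $K$): there $H^1(\mathbb{Z}_p^2,V)$ is the middle cohomology of a two-variable Koszul complex and is not $V_{\mathscr{G}_F}$ in general. Closing this gap requires extra vanishing of that middle Koszul piece for $V=H^1(G_S(F_\infty),M)$, which I expect to follow from the structure of $\mathscr{X}_\infty(m-1)$ as a finitely generated torsion $\Lambda$-module whose pseudo-isomorphism class is controlled by the main conjecture (Theorem 5.1). An alternative route, bypassing the spectral sequence at this step, is a direct Poitou--Tate duality computation of $H^2(G_S(F),M)^{*}$ as the invariants module in a single step.
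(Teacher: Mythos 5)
Your overall strategy is the same as the paper's: run the Hochschild--Serre spectral sequence for $F_\infty/F$, identify $H^1(G_S(F_\infty),\mathbb{Q}_p/\mathbb{Z}_p(1-m))$ with $(\mathscr{X}_\infty(m-1))^*$ via the trivial action of $G_S(F_\infty)$, kill the $E_2^{1,0}$-term with Tate's lemma, and use the vanishing of $H^2(G_S(F_\infty),\mathbb{Q}_p/\mathbb{Z}_p(1-m))$. The genuine gap is in your justification of that last vanishing. The coefficient module is twisted by $1-m\le -1$ (for $m\ge 2$), not by an integer $\ge 2$, so Quillen--Lichtenbaum says nothing here: at each finite level, since the local groups $H^2(F_{n,v},\mathbb{Q}_p/\mathbb{Z}_p(1-m))$ vanish for $1-m\ne 1$, one has $H^2(G_S(F_n),\mathbb{Q}_p/\mathbb{Z}_p(1-m))=\text{Sha}^2_S(F_n,\mathbb{Q}_p/\mathbb{Z}_p(1-m))\cong \text{Sha}^1_S(F_n,\mathbb{Z}_p(m))^*$, and the vanishing of these groups at negative twists is Schneider's conjecture, which is open. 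Your claim proves too much: applied at level $F$ itself it would give $H^2(\mathcal{O}_F[1/p],\mathbb{Q}_p/\mathbb{Z}_p(1-m))=0$, so the second isomorphism of the theorem would read $0\cong\mathscr{X}_\infty(m-1)^{\mathscr{G}_F}$, i.e.\ you would have proved Schneider's conjecture. The correct input is genuinely an infinite-level phenomenon: because $F_\infty\supset F(\mu_{p^\infty})$, the twist can be removed, $H^2(G_S(F_\infty),\mathbb{Q}_p/\mathbb{Z}_p(1-m))=H^2(G_S(F_\infty),\mathbb{Q}_p/\mathbb{Z}_p)(1-m)$, and the untwisted group vanishes by the weak Leopoldt conjecture (Iwasawa's theorem for cyclotomic $\mathbb{Z}_p$-extensions, applied to $F_\infty=\bigcup_n F_n(\mu_{p^\infty})$ and passed to the limit). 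This is exactly what the paper invokes at the corresponding step (its own ``???'').

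Two further remarks. First, your concern about $\mathscr{G}_F\cong\mathbb{Z}_p^2\times\Delta$ is well founded and is a point where you are more careful than the paper, which asserts $\mathrm{cd}_p\,\mathscr{G}_F=1$ without restriction; that is false when $p$ is inert or ramified, so the two-short-exact-sequence degeneration needs the extra bookkeeping you describe (your Koszul argument does correctly kill $E_2^{2,0}$ and $E_2^{3,0}$, so $H^2(G_S(F),M)\cong H^1(\mathscr{G}_F,H^1(G_S(F_\infty),M))$ still holds), but $H^1(\mathbb{Z}_p^2,-)$ is middle Koszul cohomology rather than coinvariants, so the $H^2$-statement in the non-split case does not reduce to $\mathscr{X}_\infty(m-1)^{\mathscr{G}_F}$ without additional input; neither your sketch nor the paper closes this. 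Second, your appeal to the main conjecture (Theorem 5.1) to control that middle Koszul piece is circular in spirit for this theorem, which is meant to be a purely descent-theoretic statement used \emph{before} the main conjecture enters in Theorem 6.6.
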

\begin{proof}
  The proof is inspired by Coates's in \cite{C}. Let $F_S$ for the maximal extension of $F$ which is unramified outside of $S=\{p, \infty\}$. Let $G_S(F_\infty)$ be the subgroup of $G_S$ fixing $F_\infty$. Both $\mathscr{G}_F$ and $G_S(F_\infty)$ act on $\mathbb{Q}_p/\mathbb{Z}_p(1-m)$.
  Also $\mathscr{G}_F$ has cd$_p \mathscr{G}_F=1$. Hence the Hochschild-Serre spectral sequence
  $$E^{i,j}_2=H^i(\mathscr{G}_F, H^j(G_S(F_\infty),\mathbb{Q}_p/\mathbb{Z}_p(1-m)))\Rightarrow H^{i+j}(G_S, \mathbb{Q}_p/\mathbb{Z}_p(1-m))$$
  degenerates into the two short exact sequences
  \begin{align*}
    0 & \to  H^1(\mathscr{G}_F, H^{i-1}(G_S(F_\infty),\mathbb{Q}_p/\mathbb{Z}_p(1-m)))\to H^{i}(G_S, \mathbb{Q}_p/\mathbb{Z}_p(1-m))\\
     & \to H^{i}(G_S(F_\infty),\mathbb{Q}_p/\mathbb{Z}_p(1-m))^{\mathscr{G}_F}\to 0
  \end{align*}
  with $i=1,2.$  Since $G_S(F_\infty)$ acts trivially on $\mathbb{Q}_p/\mathbb{Z}_p(1-m)$ , we have
  $$H^{1}(G_S(F_\infty),\mathbb{Q}_p/\mathbb{Z}_p(1-m))=\text{Hom}(\mathscr{X}_\infty,\mathbb{Q}_p/\mathbb{Z}_p(1-m))
  =(\mathscr{X}_\infty(m-1))^*.$$
  Moreover, for $m\neq1$, we have $ H^1(\mathscr{G}_F,\mathbb{Q}_p/\mathbb{Z}_p(1-m))=0$ by Lemma\ref{lem:Tate}. Finally, there is
  $$H^2(G_S(F_\infty),\mathbb{Q}_p/\mathbb{Z}_p(1-m))=H^2(G_S(F_\infty),\mathbb{Q}_p/\mathbb{Z}_p)(1-m)=0 \quad\textcolor{red}{\textbf{???}}$$
  The assertions of the theorem now follow from these remarks and the above two exact sequences. This completes the proof.
\end{proof}

\begin{theorem}
  For $m\neq0,1$, we have a canonical exact sequence
  $$0\to (\overline{U}'_\infty(m-1))_{\mathscr{G}_F}\to H^1(\mathcal{O}_F[1/p],\mathbb{Z}_p(m))\to \mathscr{A}'_\infty(m-1)^{\mathscr{G}_F}
  \to 0.$$
\end{theorem}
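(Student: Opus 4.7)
The plan is to carry out a descent argument from $F_\infty$ down to $F$, in the spirit of Coates \cite{C} and Kolster–Nguyen Quang Do–Fleckinger \cite{KNF}, starting from an Iwasawa-theoretic short exact sequence at the $F_\infty$-level and taking $\mathscr{G}_F$-coinvariants. I would proceed in three stages.

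\textbf{First stage (Iwasawa-level exact sequence).} Since $F_\infty\supset \mu_{p^\infty}$, every $F_n$ contains enough roots of unity so that the Kummer short exact sequence, twisted by $\mathbb{Z}/p^n(m-1)$, reads
\[
0 \to \overline{U}'_n(m-1)/p^n \to H^1(\mathcal{O}_{F_n}[1/p], \mathbb{Z}/p^n(m)) \to \mathscr{A}'_n(m-1)[p^n] \to 0.
\]
Passing to the double inverse limit (first over $n$ at fixed level, then over levels with respect to the norm maps) yields the Iwasawa-theoretic exact sequence
\[
0 \to \overline{U}'_\infty(m-1) \to \varprojlim_n H^1(\mathcal{O}_{F_n}[1/p], \mathbb{Z}_p(m)) \to T_p\bigl(\mathscr{A}'_\infty(m-1)\bigr) \to 0.
\]

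\textbf{Second stage (descent).} I would then run the Hochschild–Serre spectral sequence for $G_S(F)\supset G_S(F_\infty)$ with coefficients $\mathbb{Z}_p(m)$. Combined with the integral analog of Lemma \ref{lem:Tate} (which kills $H^1(\mathscr{G}_F,\mathbb{Z}_p(m))$ for $m\neq 0,1$), this identifies
\[
H^1(\mathcal{O}_F[1/p], \mathbb{Z}_p(m)) \;\cong\; \Bigl(\varprojlim_n H^1(\mathcal{O}_{F_n}[1/p], \mathbb{Z}_p(m))\Bigr)_{\mathscr{G}_F}
\]
up to obstructions from $H^2(\mathscr{G}_F,-)$. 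Applying $(-)_{\mathscr{G}_F}$ to the sequence of the first stage and invoking the snake lemma then produces a four-term exact sequence which, after identifying $T_p(\mathscr{A}'_\infty(m-1))_{\mathscr{G}_F}$ with $\mathscr{A}'_\infty(m-1)^{\mathscr{G}_F}$ via Pontryagin duality (using that $\mathscr{A}'_\infty$ is a torsion $\Lambda$-module of finite cotype, so Tate modules and discrete duals exchange invariants and coinvariants), collapses to the desired short exact sequence. Injectivity on the left requires the connecting $H^1$-term to vanish, which again follows from Lemma \ref{lem:Tate} in its appropriate form.

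\textbf{Main obstacle.} The principal difficulty is expected in the case $\mathscr{G}_F\cong\mathbb{Z}_p^2\times\Delta$ (when $p$ does not split in $K$), where $\mathrm{cd}_p(\mathscr{G}_F)=2$ and the Hochschild–Serre spectral sequence carries nontrivial $E_2^{2,0}$-contributions that are not directly killed by Lemma \ref{lem:Tate}. To dispose of these, I would either dualise via Theorem \ref{thm:etale-iwasawa} and the Poitou–Tate exact sequence recalled earlier in this section (converting the problem to one about $\mathscr{X}_\infty(m-1)^{\mathscr{G}_F}$ and the class-field-theoretic quotient $\mathscr{X}_\infty\twoheadrightarrow\mathscr{A}'_\infty$), or invoke Rubin's structural results (Theorem \ref{thm:Imagi-quad-conj-of-Iwasa}) to show that the characteristic ideals controlling the spurious higher cohomology are coprime to the augmentation of $\mathscr{G}_F$, so that the descent is clean.
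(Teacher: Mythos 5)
Your overall strategy (Kummer sequence at each finite level, inverse limits, then Hochschild--Serre descent) is the standard one and agrees with the paper as far as the left-hand injection is concerned --- the paper's own proof in fact only establishes that injection and says nothing about exactness in the middle or about the cokernel. But your plan for producing the right-hand term $\mathscr{A}'_\infty(m-1)^{\mathscr{G}_F}$ contains a genuine error. The class-group contribution to $\varprojlim_n H^1(\mathcal{O}_{F_n}[1/p],\mathbb{Z}_p(m))$ vanishes: at a fixed level $F_n$ the group $\varprojlim_k \mathscr{A}'_n[p^k]$ is the Tate module of a finite group, hence zero, and even in the diagonal limit the transition maps on the class-group terms are (norm)$\,\circ\,$(multiplication by $p$), whose iterated images shrink to $0$ inside the finite groups $\mathscr{A}'_n$. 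Hence $\overline{U}'_\infty(m-1)\to\varprojlim_n H^1(\mathcal{O}_{F_n}[1/p],\mathbb{Z}_p(m))$ is already an isomorphism, your group ``$T_p\bigl(\mathscr{A}'_\infty(m-1)\bigr)$'' is (generically) zero, and there is no Pontryagin-duality identification of its coinvariants with the nonzero finite group $\mathscr{A}'_\infty(m-1)^{\mathscr{G}_F}$. Note also that $\mathscr{A}'_\infty=\varprojlim\mathscr{A}'_n$ is compact, not of cofinite type; the duality you invoke would apply to the direct limit $\varinjlim\mathscr{A}'_n$, which is not the module appearing in the statement.

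The cokernel must instead be extracted from the descent step itself: in the codescent spectral sequence $H_i(\mathscr{G}_F,H^j_{Iw})\Rightarrow H^{j-i}(\mathcal{O}_F[1/p],\mathbb{Z}_p(m))$, the cokernel of $(\overline{U}'_\infty(m-1))_{\mathscr{G}_F}\to H^1(\mathcal{O}_F[1/p],\mathbb{Z}_p(m))$ is $H_1(\mathscr{G}_F,H^2_{Iw}(\mathbb{Z}_p(m)))$, and it is $H^2_{Iw}(\mathbb{Z}_p(1))=\varprojlim_n H^2(\mathcal{O}_{F_n}[1/p],\mathbb{Z}_p(1))$ --- not $H^1_{Iw}$ --- that contains $\mathscr{A}'_\infty$, together with a local/Brauer part that must be shown not to contribute after twisting by $m-1\neq 0$. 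Using $H_1(\Gamma,M)\cong M^{\Gamma}$ for $\Gamma\cong\mathbb{Z}_p$ then produces $\mathscr{A}'_\infty(m-1)^{\mathscr{G}_F}$; equivalently one can identify the cokernel with the degree-two Tate--Poitou kernel, which is the route the paper itself takes later in the proof of Theorem \ref{thm:index}. Your ``main obstacle'' paragraph correctly flags that the case $\mathscr{G}_F\cong\mathbb{Z}_p^2\times\Delta$ requires extra care (there $H_1(\mathscr{G}_F,-)$ is no longer simply the module of invariants and $\mathrm{cd}_p=2$ creates additional terms), but that difficulty has to be resolved for the cokernel computation itself, not merely for the injectivity of the edge map.
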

\begin{proof}
  The  exact sequence
  $$0\to \mathbb{Z}/p^n\mathbb{Z}(1)\to F_n^\times\xrightarrow{p^n}F_n^\times \to 0$$
  leads to injective map
  $$\varprojlim_n\overline{U}'_n \to \varprojlim_n H^1(\mathcal{O}_{F_n}[1/p],\mathbb{Z}_p(1)).$$
  Twisting the above map by Tate module $\mathbb{Z}_p(m-1)$ and taking coinvariants with respect to $\mathscr{G}_F$, we have the injective homomorphism
  $$0\to (\overline{U}'_\infty(m-1))_{\mathscr{G}_F}\to H^1(\mathcal{O}_F[1/p],\mathbb{Z}_p(m)).$$
\end{proof}

We define a map in the sprit of Soul\'{e}
\[ e_p: \overline{\mathscr{C}}_\infty(m-1) \to H^1(\mathcal{O}_F[1/p],\mathbb{Z}_p(m)).\]
Write
\[ H^1(\mathcal{O}_F[1/p],\mathbb{Z}_p(m))=\varprojlim_r H^1(\mathcal{O}_F[1/p],\mu_{p^r}^{\otimes m}).\]
 For a norm compatible system of elliptic units $(\theta_n)_n$ and an element $(t_n)_n\in \mu_{p^n}^{\otimes m-1}$, put
\[ e_p((\theta_r\otimes t_{n})_n) :=(N_{F_n/F}(\theta_n\otimes t_{n}))_n \]
where $\theta_n\otimes t_{n}$ is an element in
\[ \mathcal{O}_{F_n}[1/p]^*/(\mathcal{O}_{F_n}[1/p]^*)^{p^n}\otimes \mu_{p^n}^{\otimes m-1}\subset H^1(\mathcal{O}_{F_n}[1/p],\mu_{p^n}^{\otimes m}),  \]
where the inclusion comes from Kummer theory and $N_{F_n/F}$ is the norm map on the cohomology. The map $e_p$ factors through the covariants under $\mathscr{G}_F$, so we have the following map
\begin{theorem}
  Let $F$ be an abelian extension over $K$ and $m>1$. Then the map
  $$e_p: \overline{\mathscr{C}}_\infty(m-1)_{\mathscr{G}_F}\to H^1(\mathcal{O}_F[1/p],\mathbb{Z}_p(m))$$
  is injective.
\end{theorem}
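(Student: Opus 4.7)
The plan is to factor $e_p$ through the injection established in the previous theorem. Since Robert's elliptic units sit inside $U'(F_n)$ as global $S$-units of $F_n$, the inclusions at finite level induce, after $p$-completion and passage to the norm inverse limit, a $\Lambda$-linear homomorphism $\iota\colon \overline{\mathscr{C}}_\infty \to \overline{U}'_\infty$. Tracing through the Kummer-theoretic definition of $e_p$, one checks that $e_p$ factors as
\[ \overline{\mathscr{C}}_\infty(m-1)_{\mathscr{G}_F} \xrightarrow{\iota_*} (\overline{U}'_\infty(m-1))_{\mathscr{G}_F} \hookrightarrow H^1(\mathcal{O}_F[1/p],\mathbb{Z}_p(m)), \]
where the second arrow is the injection from the previous theorem. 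So the injectivity of $e_p$ reduces to that of $\iota_*$.

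Next, I would analyze $Q:=\mathrm{coker}(\iota)$. After identifying the image of $\iota$ with its natural place inside $\overline{\mathscr{E}}_\infty$, this cokernel is essentially controlled by $\overline{\mathscr{E}}_\infty/\overline{\mathscr{C}}_\infty$. By Rubin's main conjecture (Theorem~\ref{thm:Imagi-quad-conj-of-Iwasa}), $\overline{\mathscr{E}}_\infty/\overline{\mathscr{C}}_\infty$ is a torsion $\Lambda$-module whose characteristic ideal equals $\mathrm{char}(\mathscr{A}_\infty)$ when $p$ splits in $K$, and divides it in the nonsplit case. Applying $(-)(m-1)$ to $0\to\overline{\mathscr{C}}_\infty\to\overline{U}'_\infty\to Q\to 0$ and passing to $\mathscr{G}_F$-coinvariants produces
\[ \mathrm{Tor}_1^{\Lambda_F}(\mathbb{Z}_p,Q(m-1))\to \overline{\mathscr{C}}_\infty(m-1)_{\mathscr{G}_F}\xrightarrow{\iota_*}(\overline{U}'_\infty(m-1))_{\mathscr{G}_F}, \]
with $\Lambda_F=\mathbb{Z}_p[[\mathscr{G}_F]]$. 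Injectivity of $\iota_*$ then follows from the vanishing of the leftmost term, equivalently of $Q(m-1)^{\mathscr{G}_F}$.

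The main obstacle will be establishing this vanishing, which I would argue isotypically. For each irreducible $\mathbb{Z}_p$-character $\chi$ of $\Delta$, the characteristic ideal of $Q^\chi(m-1)$ is generated by a Tate twist of Katz's $p$-adic $L$-function attached to $\chi$, and for $m>1$ this twist is nonzero at the augmentation of the Iwasawa algebra; hence $Q^\chi(m-1)^{\mathscr{G}_F}$ is finite. In the split case, every $\chi$ is controlled by the equality in Theorem~\ref{thm:Imagi-quad-conj-of-Iwasa}(i). In the nonsplit case one uses the equality for characters nontrivial on the decomposition group at $\mathfrak{p}$, and handles the remaining characters by a direct cohomological argument using that $m>1$ precludes contributions from $\mu_{p^\infty}$. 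Assembling these inputs shows the Tor term is zero, completing the proof of injectivity.
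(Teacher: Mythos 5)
Your route is genuinely different from the paper's. The paper does not factor $e_p$ through the $S$-unit injection of the preceding theorem; it works instead with the closures of the global and elliptic units inside the semi-local units $\mathscr{U}_\infty$. It first notes that the finiteness of $H^2(\mathcal{O}_F[1/p],\mathbb{Z}_p(m))$ forces $(\mathscr{A}_\infty(m-1))^{\mathscr{G}_F}$ to be finite, and then argues by contradiction on ranks: if the image of $(\overline{\mathscr{E}}_\infty(m-1))_{\mathscr{G}_F}$ under $e_p$ were finite, so would be its image in $(\mathscr{U}_\infty(m-1))_{\mathscr{G}_F}\cong\bigl(\bigoplus_{v\mid p}H^1(F_v,\mathbb{Q}_p/\mathbb{Z}_p(1-m))\bigr)^*$, which is ruled out by a Poitou--Tate diagram identifying the kernel $H_1(\mathscr{G}_F,\mathscr{U}_\infty/\overline{\mathscr{E}}_\infty(m-1))$ with something controlled by the finite group $(\mathscr{A}_\infty(m-1))^{\mathscr{G}_F}$; injectivity on $\overline{\mathscr{C}}_\infty$ then follows because $\overline{\mathscr{C}}_\infty$ and $\overline{\mathscr{E}}_\infty$ have the same $\Lambda$-rank. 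Your factorization through $(\overline{U}'_\infty(m-1))_{\mathscr{G}_F}\hookrightarrow H^1(\mathcal{O}_F[1/p],\mathbb{Z}_p(m))$ and the reduction to a $\mathrm{Tor}_1$-vanishing is a legitimate alternative, closer in spirit to the cyclotomic argument of Kolster--Nguyen Quang Do--Fleckinger, and it makes the role of the previous theorem explicit.

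That said, the final step of your argument has genuine gaps. First, the characteristic ideal of $Q=\overline{U}'_\infty/\overline{\mathscr{C}}_\infty$ is not a Katz $p$-adic $L$-function: by Rubin's main conjecture (Theorem~\ref{thm:Imagi-quad-conj-of-Iwasa}) the module governed by the Katz $L$-function is $\mathscr{U}_\infty/\overline{\mathscr{C}}_\infty$ for the \emph{local} units, whereas $\mathrm{char}(Q)$ is (up to the $S$-unit correction) $\mathrm{char}(\mathscr{A}_\infty)$; the correct source of finiteness for $Q(m-1)^{\mathscr{G}_F}$ is the finiteness of $\mathscr{A}_\infty(m-1)^{\mathscr{G}_F}$, deduced from the finiteness of $H^2(\mathcal{O}_F[1/p],\mathbb{Z}_p(m))$ (equivalently of $K_{2m-2}(\mathcal{O}_F)$), not an $L$-value nonvanishing. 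Second, and more seriously, finiteness of $\mathrm{Tor}_1^{\Lambda_F}(\mathbb{Z}_p,Q(m-1))$ does not imply its vanishing, and a nonzero characteristic power series at the augmentation only yields finiteness: a nonzero finite module can perfectly well have trivial characteristic ideal and nonzero $\mathscr{G}_F$-invariants. To conclude you must either prove the invariants actually vanish, or (more realistically) observe that the target $\overline{\mathscr{C}}_\infty(m-1)_{\mathscr{G}_F}$ of the connecting map is $\mathbb{Z}_p$-torsion-free --- using Rubin's result that $\overline{\mathscr{C}}_\infty^\chi$ is free of rank one over $\Lambda^\chi$ --- so that the finite image of the Tor term is forced to be zero. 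Third, in the split case $\mathscr{G}_F$ has a $\mathbb{Z}_p^2$-quotient, so $\mathrm{Tor}_1^{\Lambda_F}(\mathbb{Z}_p,-)$ is not the invariants functor and pseudo-null no longer means finite; the homological algebra must be redone with a two-variable Koszul resolution. None of these is obviously fatal, but as written the assertion ``the Tor term is zero'' does not follow from what precedes it.
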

\begin{proof}
We first show that the finiteness of $H^2(\mathcal{O}_F[1/p],\mathbb{Z}_p(m))$ implies the finiteness of $(\mathscr{A}_\infty(m-1))_{\mathscr{G}_F}$.
Using \cite[Lemma6.2]{R} we know that this implies that $(\mathscr{A}_\infty(m-1))^{\mathscr{G}_F}$ is finite. It suffices to show that the kernel of
$e_p$ on $\overline{\mathscr{E}}_\infty(m-1)$ is finite by \cite[Corollary7.8]{R} because both $\overline{\mathscr{E}}$ and $\overline{\mathscr{C}}$
are $\Lambda$-modules of rank 1 with $\overline{\mathscr{E}}/\overline{\mathscr{C}}$ a torsion module. Suppose that the image of $(\mathscr{E}_\infty(m-1))_{\mathscr{G}_F}$ under $e_p$ has not rank 1, i.e. is finite. Then the image of $(\overline{\mathscr{E}}_\infty(m-1))_{\mathscr{G}_F}$
in $(\mathscr{U}_\infty(m-1))_{\mathscr{G}_F}$ must be finite as well because of
 \[(\mathscr{U}_\infty(m-1))_{\mathscr{G}_F}\cong H^1(F\otimes \mathbb{Q}_p, \mathbb{Q}_p/\mathbb{Z}_p(1-m))^*:=\left(\bigoplus_{v|p}H^1(F_v,\mathbb{Q}_p/\mathbb{Z}_p(1-m))\right)^*,\]
  which is the local case of Theorem\ref{thm:etale-iwasawa}\textcolor{red}{???}. The kernel of the map
\[ (\overline{\mathscr{E}}_\infty(m-1))_{\mathscr{G}_F} \to (\mathscr{U}_\infty(m-1))_{\mathscr{G}_F}\]
is $H_1(\mathscr{G}_F, \mathscr{U}_\infty/ \overline{\mathscr{E}}_\infty(m-1))$. On the other hand, up to finite groups, there exists $H_1(\mathscr{G}_F, \mathscr{X}_\infty(m-1))\cong H^2(\mathcal{O}_F[1/p], \mathbb{Q}_p/\mathbb{Z}_p(1-m))^*$\textcolor{red}{???}. So, we get a commutative
diagram (up to finite groups)
\[
   \begin{tikzcd}
   H_1(\mathscr{G}_F, \mathscr{U}_\infty/ \overline{\mathscr{E}}_\infty(m-1)) \arrow[d]\arrow[r, "\iota"] & H_1(\mathscr{G}_F, \mathscr{X}_\infty(m-1))\arrow[d] \\
   (\overline{\mathscr{E}}_\infty(m-1))_{\mathscr{G}_F} \arrow[d]\arrow[r, "e_p"] &  H^1(\mathcal{O}_F[1/p], \mathbb{Z}_p(m)) \arrow[d]\\
   (\mathscr{U}_\infty(m-1))_{\mathscr{G}_F} \arrow[r, "\cong"] & H^1(F\otimes \mathbb{Q}_p,\mathbb{Q}_p/\mathbb{Z}_p(1-m)))^*.
   \end{tikzcd}  \]
 The kernel of the map $\iota$ is a quotient of $(\mathscr{A}_\infty(m-1))^{\mathscr{G}_F}$ which is finite. Thus, we get a contradiction and $e_p$
 can not be zero on the free part of $(\overline{\mathscr{E}}_\infty(m-1))_{\mathscr{G}_F}$. Hence, $e_p$ is non zero on $(\overline{\mathscr{C}}_\infty(m-1))_{\mathscr{G}_F}$.
\end{proof}

Consider the exact sequence
\begin{equation*}
  0\to \overline{\mathscr{E}}_\infty/\overline{\mathscr{C}}_\infty \to \mathscr{U}_\infty/ \overline{\mathscr{C}}_\infty\to \mathscr{X}_\infty \to \mathscr{A}_\infty \to 0.
\end{equation*}
Let $\chi$ be a character of $G=\text{Gal}(F/K)$ (viewed also as a character of $\Delta=\text{Gal}(F_1/K)$). From the above exact sequence, we obtain an exact sequence of $\chi$-eigenspaces
\begin{equation*}
  0\to (\overline{\mathscr{E}}_\infty/\overline{\mathscr{C}}_\infty(m-1))^\chi \to (\mathscr{U}_\infty/ \overline{\mathscr{C}}_\infty(m-1))^\chi\to \mathscr{X}_\infty(m-1)^\chi \to \mathscr{A}_\infty(m-1)^\chi \to 0.
\end{equation*}

 The following algebraic lemma is similar to the results in \cite[Lemma3.5.5]{C} or \cite[Lemma5.3]{KNF}.
\begin{lemma}\label{lem:herbrand}
 Let $\chi$ be is an irreducible $\mathbb{Z}_p$-representation of $\Delta$ which is nontrivial on the decomposition group of $\mathfrak{p}$ in $\Delta$,
  $\alpha: ((\mathscr{U}_\infty/\overline{\mathscr{C}}_\infty(m-1))^\chi)^\Gamma\to (\mathscr{X}_\infty(m-1))^\chi)^\Gamma$ and
  $\beta: ((\mathscr{U}_\infty/\overline{\mathscr{C}}_\infty(m-1))^\chi)_\Gamma\to (\mathscr{X}_\infty(m-1))^\chi)_\Gamma$ be induced by cohomology. Then kernels
  and cokernels of $\alpha$ and $\beta$ are finite, moreover
 \[ \frac{\# ker \alpha}{\# coker \alpha}  =\frac{\#ker \beta}{\#coker \beta}.\]
\end{lemma}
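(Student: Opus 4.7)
The plan is to reduce the lemma to the equality of characteristic ideals $\mathrm{char}_{\Lambda^\chi}(A) = \mathrm{char}_{\Lambda^\chi}(D)$ furnished by Rubin's main conjecture (Theorem~\ref{thm:Imagi-quad-conj-of-Iwasa}(ii)) under the hypothesis on $\chi$, where I write $A := (\overline{\mathscr{E}}_\infty/\overline{\mathscr{C}}_\infty(m-1))^\chi$, $B := (\mathscr{U}_\infty/\overline{\mathscr{C}}_\infty(m-1))^\chi$, $C := \mathscr{X}_\infty(m-1)^\chi$, and $D := \mathscr{A}_\infty(m-1)^\chi$.

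First I would split the four-term exact sequence into two short exact sequences
\[ 0 \to A \to B \to I \to 0 \quad \text{and} \quad 0 \to I \to C \to D \to 0, \]
with $I := \mathrm{image}(B \to C)$, and then verify that each of $A, B, C, D, I$ is a finitely generated torsion $\Lambda^\chi$-module whose characteristic ideal is prime to the augmentation ideal of $\Gamma$. All four endpoints are torsion by Rubin's structural results, and the hypothesis on $\chi$ is precisely what ensures coprimality to the augmentation ideal; consequently $M^\Gamma$ and $M_\Gamma$ (together with $H^1(\Gamma, M)$ when $\Gamma \cong \mathbb{Z}_p^2$) are finite for each such $M$. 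Elementary counting applied to $\alpha$ and $\beta$ yields
\[ \frac{\#\ker\alpha}{\#\mathrm{coker}\,\alpha} = \frac{\#B^\Gamma}{\#C^\Gamma}, \qquad \frac{\#\ker\beta}{\#\mathrm{coker}\,\beta} = \frac{\#B_\Gamma}{\#C_\Gamma}, \]
which reduces the lemma to the Herbrand-type equality $q(B) = q(C)$, where $q(M) := \#M^\Gamma/\#M_\Gamma$.

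Next I would take the long exact sequences of $\Gamma$-cohomology attached to the two short exact sequences above and use multiplicativity of the $\Gamma$-Euler characteristic in short exact sequences to obtain $q(B)/q(C) = q(A)/q(D)$. Because the $\Gamma$-Euler characteristic of a finitely generated torsion $\Lambda^\chi$-module with finite cohomology is an invariant of the pseudo-isomorphism class (finite modules contribute trivially), Rubin's equality $\mathrm{char}(A) = \mathrm{char}(D)$ then gives $q(A) = q(D)$, hence $q(B) = q(C)$, and the lemma follows.

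The hardest step will be the case $\Gamma \cong \mathbb{Z}_p^2$, where the two-term Herbrand quotient $\#M^\Gamma/\#M_\Gamma$ is no longer multiplicative in short exact sequences and one must instead work with the three-term Euler characteristic $\#M^\Gamma \cdot \#M_\Gamma/\#H^1(\Gamma, M)$. Passing from the three-term identity to the two-term identity $\#B^\Gamma \cdot \#C_\Gamma = \#C^\Gamma \cdot \#B_\Gamma$ that we actually need requires additional input, most likely a more refined snake-lemma diagram chase that exploits the fact that the hypothesis on $\chi$ kills the relevant $H^1$-contributions arising from the decomposition group of $\mathfrak{p}$. In the case $\Gamma \cong \mathbb{Z}_p$ the two notions of Euler characteristic coincide and the argument reduces to exactly the Herbrand-quotient calculation carried out in \cite[Lemma~3.5.5]{C} and \cite[Lemma~5.3]{KNF}.
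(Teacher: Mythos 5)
Your overall strategy is the same as the paper's: split the four-term exact sequence through the image of $B\to C$, invoke Theorem~\ref{thm:Imagi-quad-conj-of-Iwasa}(ii) to get $\mathrm{char}(A)=\mathrm{char}(D)$ and hence equality of the Herbrand quotients of the two end terms, and transfer this to the middle terms by (co)homological bookkeeping. However, your reduction of the lemma to $q(B)=q(C)$ via $\#\ker\alpha/\#\mathrm{coker}\,\alpha=\#B^\Gamma/\#C^\Gamma$ presupposes that $B^\Gamma,B_\Gamma,C^\Gamma,C_\Gamma$ are all finite, i.e.\ that $(\mathscr{U}_\infty/\overline{\mathscr{C}}_\infty(m-1))^\chi$ and $\mathscr{X}_\infty(m-1)^\chi$ are torsion with characteristic ideal prime to the (twisted) augmentation ideal. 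Rubin's theorem gives you this coprimality only for the end terms $A$ and $D$; you assert it for all five modules without justification, and it is strictly more than the lemma claims. The paper's proof avoids this issue: it never forms $\#B^\Gamma$ or $\#C^\Gamma$, but instead chases a diagram built from the six-term (co)invariants sequence of $0\to B/A\to C\to D\to 0$ together with the comparison maps $B^\Gamma\to(B/A)^\Gamma$ and $B_\Gamma\to(B/A)_\Gamma$, using only multiplicativity of $\#\ker/\#\mathrm{coker}$ along compositions of maps with finite kernel and cokernel. You should recast your counting in that form, or else supply the missing finiteness.

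The second gap is the one you flag yourself and do not close: the case $\Gamma\cong\mathbb{Z}_p^2$. This is not a peripheral case --- the lemma is applied only when $p$ is inert or ramified in $K$ (Case~2 of the proof of Theorem~\ref{thm:index}), which is exactly when $\Gamma\cong\mathbb{Z}_p^2$. There the six-term sequence $0\to M'^\Gamma\to M^\Gamma\to M''^\Gamma\to M'_\Gamma\to M_\Gamma\to M''_\Gamma\to 0$ fails ($H^1(\Gamma,-)$ is no longer the coinvariants and $H^2(\Gamma,-)$ does not vanish), the two-term quotient $\#M^\Gamma/\#M_\Gamma$ is not multiplicative in short exact sequences, and it is no longer determined by the characteristic ideal alone, so the step $\mathrm{char}(A)=\mathrm{char}(D)\Rightarrow q(A)=q(D)$ also needs an argument. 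To be fair, the paper's own proof has the identical defect: its displayed diagram is copied from the one-variable situation of \cite[Lemma~5.3]{KNF} and is only valid for $\Gamma\cong\mathbb{Z}_p$. A complete argument would either descend in two successive $\mathbb{Z}_p$-steps or work with the full Koszul complex of $\mathbb{Z}_p^2$ and control the intermediate $H^1$ and $H_1$ terms; as it stands, your proposal reproduces the paper's argument together with its gap at precisely the point where the lemma is needed.
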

\begin{proof}
We follow \cite[Lemma5.3]{KNF} to get this case. By Theorem\ref{thm:Imagi-quad-conj-of-Iwasa}(ii), the assumption implies $A:=(\overline{\mathscr{E}}_\infty/\overline{\mathscr{C}}_\infty(m-1))^\chi$ and $B:=\mathscr{A}_\infty(m-1)^\chi$ have the same characteristic ideal, hence $A^\Gamma, A_\Gamma, B^\Gamma, B_\Gamma$ is finite. Moreover, the Herbrand quotients are equal:
\[  \frac{\# A^\Gamma}{\# A_\Gamma}=\frac{\# B^\Gamma}{\# B_\Gamma}. \]
Denote $X=(\mathscr{U}_\infty/\overline{\mathscr{C}}_\infty(m-1))^\chi$ and $Y=\mathscr{X}_\infty(m-1))^\chi$. Consider the following diagram
\[
   \begin{tikzcd}
	0\arrow[r]	& (X/A)^\Gamma \arrow[r] & Y^\Gamma \arrow[r] & B^\Gamma \arrow[r] & (X/A)_\Gamma \arrow[r] & Y_\Gamma \arrow[r] & B_\Gamma\arrow[r] & 0\\
		& X^\Gamma \arrow[r,"="]\arrow[u, "\gamma"'] & X^\Gamma \arrow[u, "\alpha"'] & & X_\Gamma \arrow[r,"="]\arrow[u, "\delta"'] & X_\Gamma \arrow[u, "\beta"'] & &.
	\end{tikzcd}  \]
Take the quotients of kernels and cokernels along the diagram
\[ \frac{\# ker\gamma}{\# coker\gamma}\cdot \frac{1}{\# B^\Gamma} \cdot \frac{\# ker\beta}{\# coker\beta}=\frac{\# ker\alpha}{\# coker\alpha}\cdot
\frac{\# ker\delta}{\# coker\delta}\cdot \frac{1}{\# B_\Gamma} .\]
Since $ker\gamma=A^\Gamma$ and
\[  0 \to coker \gamma \to A_\Gamma \to ker\delta \to 0\]
is exact and $coker\delta=0$, the result follows.
\end{proof}
The following lemma is \cite[Lemma5.3]{KNF}.
\begin{lemma}\label{lem:Zp-extension}
  Let $F_\infty/F$ be any $\mathbb{Z}_p$-extension with Galois group $\Gamma$, and let
  \[ 0\to A \to X \to Y \to B \to 0 \]
  be an exact sequence of $\mathbb{Z}_p[[\Gamma]]$-module, such that $X$ and $Y$ have the same characteristic polynomial. Let $\alpha: X^\Gamma \to Y^\Gamma$ and $\alpha: X_\Gamma \to Y_\Gamma$  be induced by cohomology. If any of the four groups $A^\Gamma, A_\Gamma, B^\Gamma, B_\Gamma$ is finite,
  the same is true for kernels  and cokernels of $\alpha$ and $\beta$ and we have
   \[ \frac{\# ker \alpha}{\# coker \alpha}  =\frac{\#ker \beta}{\#coker \beta}.\]
\end{lemma}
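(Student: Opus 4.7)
The plan is to follow the strategy of \cite[Lemma 5.3]{KNF}: split the four-term exact sequence into two short exact sequences via the intermediate module $I := \mathrm{image}(X \to Y)$, giving
\[ 0 \to A \to X \to I \to 0, \qquad 0 \to I \to Y \to B \to 0. \]
Multiplicativity of the characteristic ideal in short exact sequences yields $\mathrm{char}(X) = \mathrm{char}(A)\,\mathrm{char}(I)$ and $\mathrm{char}(Y) = \mathrm{char}(I)\,\mathrm{char}(B)$, so the hypothesis $\mathrm{char}(X) = \mathrm{char}(Y)$ forces $\mathrm{char}(A) = \mathrm{char}(B)$.

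Next I would establish the key Herbrand-quotient input. For any finitely generated torsion $\Lambda$-module $M$, the snake-lemma sequence $0 \to M^\Gamma \to M \xrightarrow{\gamma - 1} M \to M_\Gamma \to 0$ shows that $M^\Gamma$ is finite iff $M_\Gamma$ is, iff $T = \gamma - 1$ is coprime to $\mathrm{char}(M)$. In that case the Herbrand quotient $h(M) := \#M^\Gamma/\#M_\Gamma$ depends only on $\mathrm{char}(M)$ since it is multiplicative in short exact sequences, trivial on pseudo-null (hence finite) modules, and equals $|f(0)|_p$ on the elementary module $\Lambda/(f)$. Thus the hypothesis that any one of $A^\Gamma, A_\Gamma, B^\Gamma, B_\Gamma$ is finite, combined with $\mathrm{char}(A) = \mathrm{char}(B)$, forces all four to be finite and gives $h(A) = h(B)$.

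I would then apply $\Gamma$-cohomology to each short exact sequence to obtain six-term sequences
\[ 0 \to A^\Gamma \to X^\Gamma \to I^\Gamma \xrightarrow{\partial_1} A_\Gamma \to X_\Gamma \to I_\Gamma \to 0, \]
\[ 0 \to I^\Gamma \to Y^\Gamma \to B^\Gamma \xrightarrow{\partial_2} I_\Gamma \to Y_\Gamma \to B_\Gamma \to 0. \]
Since $\alpha$ factors as $X^\Gamma \to I^\Gamma \hookrightarrow Y^\Gamma$, one reads off $\ker\alpha = A^\Gamma$ together with an exact sequence $0 \to \mathrm{image}(\partial_1) \to \coker\alpha \to \ker\partial_2 \to 0$. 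Similarly $\beta$ factors as $X_\Gamma \twoheadrightarrow I_\Gamma \to Y_\Gamma$, giving $\coker\beta = B_\Gamma$ together with $0 \to A_\Gamma/\mathrm{image}(\partial_1) \to \ker\beta \to \mathrm{image}(\partial_2) \to 0$. In particular all four groups $\ker\alpha,\coker\alpha,\ker\beta,\coker\beta$ are finite, since each of $\mathrm{image}(\partial_1)\subset A_\Gamma$, $\ker\partial_2\subset B^\Gamma$, $\mathrm{image}(\partial_2)\subset I_\Gamma$ embeds into a finite group (the last via $\#\ker\partial_2\cdot\#\mathrm{image}(\partial_2) = \#B^\Gamma$).

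Finally, multiplying orders and using $\#\ker\partial_2\cdot\#\mathrm{image}(\partial_2) = \#B^\Gamma$ gives
\[ \frac{\#\ker\alpha}{\#\coker\alpha}\cdot\frac{\#\coker\beta}{\#\ker\beta} = \frac{\#A^\Gamma\cdot\#B_\Gamma}{\#A_\Gamma\cdot\#B^\Gamma} = \frac{h(A)}{h(B)} = 1, \]
which is the desired equality. The only nontrivial point is the claim in the second step that $h(M)$ depends only on $\mathrm{char}(M)$; this is where the structure theorem for finitely generated $\Lambda$-modules genuinely enters, and it is the main obstacle. The rest is formal snake-lemma bookkeeping.
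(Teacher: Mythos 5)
Your proposal is correct and matches the intended argument: the paper does not actually prove this lemma but simply cites \cite{KNF}, and your decomposition through $I=\mathrm{image}(X\to Y)$, the equality of Herbrand quotients $\#A^\Gamma/\#A_\Gamma=\#B^\Gamma/\#B_\Gamma$ forced by $\mathrm{char}(A)=\mathrm{char}(B)$, and the six-term bookkeeping are precisely the mechanism of \cite{KNF} that the paper itself reproduces in its proof of Lemma~\ref{lem:herbrand}. The one genuinely non-formal input you single out --- that the Herbrand quotient of a torsion $\mathbb{Z}_p[[\Gamma]]$-module is defined iff $T=\gamma-1$ is prime to its characteristic ideal and then depends only on that ideal --- is handled exactly as you say, via the structure theorem, multiplicativity, and triviality on finite modules.
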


The following theorem is to compare the elliptic units with the \'{e}tale cohomology of $F$.
\begin{theorem}\label{thm:index}
  Let $F$ be an abelian extension field of imaginary quadratic field $K$, and $p$ an odd prime with $p\nmid [F:K]$. Suppose $p$ is split in $K$, or $p$ is inert or ramified over which the elliptic curve $E$ has good reduction. For $m>1$ and for each character $\chi\neq 1$ of
  $\text{Gal}(F/K)$, we have
  $$[H^1(\mathcal{O}_F[1/p],\mathbb{Z}_p(m))^\chi: (\overline{\mathscr{C}}_\infty(m-1)_{\mathscr{G}_F})^\chi]=\frac{\# H^2(\mathcal{O}_F[1/p],\mathbb{Z}_p(m))^\chi}{\#H^0(F,\mathbb{Q}_p/\mathbb{Z}_p(m-1))^\chi}.$$
\end{theorem}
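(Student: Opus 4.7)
The plan is to descend Rubin's main conjecture to the level of $F$ by a Herbrand-quotient argument in the $\chi$-isotypic component, and to compare the resulting Iwasawa ratio with \'etale cohomology via the commutative diagram built in the preceding proof.

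I begin by taking the $\chi$-eigenspace and twisting by $\mathbb{Z}_p(m-1)$ in the class field theory exact sequence (\ref{equ:exact sequ}), which yields
\[ 0 \to A \to X \to Y \to B \to 0, \]
where $A = (\overline{\mathscr{E}}_\infty/\overline{\mathscr{C}}_\infty(m-1))^\chi$, $X = (\mathscr{U}_\infty/\overline{\mathscr{C}}_\infty(m-1))^\chi$, $Y = \mathscr{X}_\infty(m-1)^\chi$, and $B = \mathscr{A}_\infty(m-1)^\chi$. Under the hypothesis on $p$ and $\chi$, Theorem \ref{thm:Imagi-quad-conj-of-Iwasa} yields $\mathrm{char}(A) = \mathrm{char}(B)$; splitting the four-term sequence into two short exact sequences then gives $\mathrm{char}(X) = \mathrm{char}(Y)$ as well. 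Applying Lemma \ref{lem:herbrand} to the middle map shows that the induced maps $\alpha$ on $\Gamma$-invariants and $\beta$ on $\Gamma$-coinvariants both have finite kernels and cokernels, whose Herbrand quotients are equal.

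Next, I paste the $\Gamma$-coinvariants of the short exact sequence $0 \to \overline{\mathscr{C}}_\infty(m-1) \to \mathscr{U}_\infty(m-1) \to \mathscr{U}_\infty/\overline{\mathscr{C}}_\infty(m-1) \to 0$ on the $\chi$-part onto the commutative diagram from the proof of the preceding theorem, with $e_p$ forming the middle row. Using the local identification $(\mathscr{U}_\infty(m-1))^\chi_{\mathscr{G}_F} \cong H^1(F \otimes \mathbb{Q}_p, \mathbb{Q}_p/\mathbb{Z}_p(1-m))^{*,\chi}$ and the interpretation of $\mathscr{X}_\infty(m-1)^\chi_{\mathscr{G}_F}$ and $\mathscr{X}_\infty(m-1)^{\chi,\mathscr{G}_F}$ given by Theorem \ref{thm:etale-iwasawa}, the cokernel of $e_p$ on the $\chi$-part becomes an alternating product involving $\#\mathscr{A}_\infty(m-1)^{\mathscr{G}_F,\chi}$, $\#\ker\alpha$, and $\#\,\mathrm{coker}\,\alpha$. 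After the cancellations dictated by the main conjecture together with the equality of Herbrand quotients, the surviving factor in the numerator is $\# H^2(\mathcal{O}_F[1/p], \mathbb{Z}_p(m))^\chi$.

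The denominator $\#H^0(F, \mathbb{Q}_p/\mathbb{Z}_p(m-1))^\chi$ arises because the identifications of Theorem \ref{thm:etale-iwasawa} hold only up to the $\mathscr{G}_F$-cohomology of $\mathbb{Q}_p/\mathbb{Z}_p(1-m)$ contributed by the Hochschild--Serre spectral sequence: by Lemma \ref{lem:Tate} the $H^1$ piece vanishes for $m \neq 1$, leaving only the $H^0$ piece, whose Pontryagin dual is precisely $H^0(F, \mathbb{Q}_p/\mathbb{Z}_p(m-1))^\chi$. The main obstacle is the bookkeeping: each descent between Iwasawa modules and \'etale cohomology carries its own finite error term, and one has to track all of them through the diagram chase in order to recover the exact ratio on the right-hand side rather than merely an equality of orders up to a bounded constant. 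The hypotheses $m > 1$ (ensuring $H^2(\mathcal{O}_F[1/p], \mathbb{Q}_p/\mathbb{Z}_p(m)) = 0$) and $\chi \neq 1$ keep all the auxiliary groups finite and are used throughout this cleanup.
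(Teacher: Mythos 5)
Your overall strategy --- descend Rubin's main conjecture through the $\chi$-component via Herbrand quotients and then compare with \'etale cohomology through the $e_p$-diagram --- is the same as the paper's, and the first half of your outline (splitting the four-term sequence \eqref{equ:exact sequ}, deducing $\mathrm{char}(X)=\mathrm{char}(Y)$ from $\mathrm{char}(A)=\mathrm{char}(B)$, and invoking Lemma~\ref{lem:herbrand} / Lemma~\ref{lem:Zp-extension}) matches the paper's. The second half, however, has a genuine gap: your account of where the right-hand side comes from is incorrect. The denominator $\#H^0(F,\mathbb{Q}_p/\mathbb{Z}_p(m-1))^\chi$ does \emph{not} arise as a Hochschild--Serre correction to Theorem~\ref{thm:etale-iwasawa}: in that spectral sequence the group $H^0(\mathscr{G}_F,\mathbb{Q}_p/\mathbb{Z}_p(1-m))$ sits in $E_2^{0,0}$ and contributes only to $H^0(G_S,\cdot)$, so the identifications of $H^1$ and $H^2$ in Theorem~\ref{thm:etale-iwasawa} are honest isomorphisms with no residual $H^0$ piece (the only correction term, $H^1(\mathscr{G}_F,\cdot)$, is killed by Lemma~\ref{lem:Tate}, as you note). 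In the paper both the numerator and the denominator come out of Poitou--Tate global duality: one needs (a) the identification of $(\mathscr{A}'_\infty(m-1)^\chi)_\Gamma$ --- note the $S$-class group $\mathscr{A}'_\infty$, not the $\mathscr{A}_\infty$ you use --- with the kernel of the localization map $H^2(\mathcal{O}_F[1/p],\mathbb{Z}_p(m))^\chi\to\bigoplus_{v\in S}H^2(F_v,\mathbb{Z}_p(m))^\chi$; (b) the four-term exact sequence whose final term is $H^0(F,\mathbb{Q}_p/\mathbb{Z}_p(1-m))^*$; and (c) the local class field theory sequence $0\to\mathscr{U}_\infty\to\mathscr{X}_{p,\infty}\to\bigoplus_{v\in S}\mathbb{Z}_p\to0$, which computes $\mathrm{coker}\,\delta$ as $\bigoplus_{v}H^0(F_v,\mathbb{Q}_p/\mathbb{Z}_p(m-1))^\chi$, followed by local duality to cancel the local $H^2$ terms against it. None of (a)--(c) appears in your outline, and the assertion that ``the surviving factor in the numerator is $\#H^2(\mathcal{O}_F[1/p],\mathbb{Z}_p(m))^\chi$'' is exactly the bookkeeping you defer; it is not a cleanup step but the substance of the proof.

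A second, smaller omission: Theorem~\ref{thm:Imagi-quad-conj-of-Iwasa}(ii) gives $\mathrm{char}(\mathscr{A}_\infty)^\chi=\mathrm{char}(\overline{\mathscr{E}}_\infty/\overline{\mathscr{C}}_\infty)^\chi$ in the inert/ramified case only for $\chi$ nontrivial on the decomposition group of $\mathfrak{p}$ in $\Delta$; the good-reduction hypothesis in the statement of Theorem~\ref{thm:index} is there precisely to guarantee, via \cite[Lemma 2.2.10]{Kin01}, that every $\chi\neq1$ has this property. Writing ``under the hypothesis on $p$ and $\chi$'' glosses over this verification, without which the inert and ramified cases are not actually covered by your argument.
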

\begin{proof}
\textbf{Case 1.} $p$ is split in $K$. Then we have $(p)=\mathfrak{p}\mathfrak{p}^*$.

  Consider the Poitou-Tate duality sequence
  \begin{align*}
    0 & \to \mathscr{X}(m-1)^{\mathscr{G}_F}\to H^1(\mathcal{O}_F[1/p],\mathbb{Z}_p(m)) \to \bigoplus_{v\in S}\mathscr{X}_{v,\infty}(m-1)_{\mathscr{G}_F} \\
   & \to  \mathscr{X}_{\infty}(m-1)_{\mathscr{G}_F} \to \mathscr{A}'_\infty(m-1)_{\mathscr{G}_F} \to 0,
  \end{align*}
  comparing with the sequence
  \begin{align*}
    0 & \to  (\mathscr{U}_\infty/ \overline{\mathscr{C}}_\infty(m-1))^{\mathscr{G}_F} \to \overline{\mathscr{C}}_\infty(m-1)_{\mathscr{G}_F}\\
     & \to \mathscr{U}_\infty(m-1)_{\mathscr{G}_F} \to  (\mathscr{U}_\infty/ \overline{\mathscr{C}}_\infty(m-1))_{\mathscr{G}_F} \to 0,
  \end{align*}
where $\mathscr{X}_{v,\infty}$ is the Galois group over $F_{v,\infty}:=F_v(\mathfrak{p}^\infty)$ \textcolor{red}{???}of the maximal abelian $p$-extension of $F_{v,\infty}$.

  Let $\mathscr{X}_{p,\infty}= \bigoplus_{v\in S}\mathscr{X}_{v,\infty}.$  Then
  $$(\mathscr{X}_{p,\infty}(m-1))^{\mathscr{G}_F}\cong \bigoplus_{v\in S}H^2(F_v, \mathbb{Q}_p/\mathbb{Z}_p(1-m))^*=0 , $$
 ( where $cd_p(F_v)=1$ since char$(F_v)=p$, see \cite[Theorem7.1.8]{Ne1}  )
  and hence $\mathscr{U}_\infty(m-1)^{\mathscr{G}_F}=0$ because of $\mathscr{U}_\infty(m-1)\subset \mathscr{X}_{p,\infty}(m-1)$.
  Moreover, local class fields theory yields an exact sequence
  $$0\to \mathscr{U}_\infty \to \mathscr{X}_{p,\infty} \to \bigoplus_{v\in S}\mathbb{Z}_p \to 0, \textcolor{red}{\textbf{???}}$$
  which leads to a short exact sequence
  $$ 0\to \mathscr{U}_\infty (m-1)_{\mathscr{G}_F}\to \mathscr{X}_{p,\infty}(m-1)_{\mathscr{G}_F} \to \bigoplus_{v\in S}H^0(F_v, \mathbb{Q}_p/\mathbb{Z}_p(m-1)) \to 0 .$$

 Let us take the $\chi$-eigenspaces of the above sequence. We obtain the following commutative diagram:
 \[
   \begin{tikzcd}
	0\arrow[r]	& (\mathscr{X}(m-1)^\chi)^\Gamma \arrow[r, "u"] & H^1(\mathcal{O}_F[1/p],\mathbb{Z}_p(m))^\chi \\
	0 \arrow[r]	& ((\mathscr{U}_\infty/ \overline{\mathscr{C}}_\infty)(m-1))^\chi)^\Gamma \arrow[r]\arrow[u, "\alpha"'] & (\overline{\mathscr{C}}_\infty(m-1)^\chi)_\Gamma \arrow[u, "\gamma"'] \\
& & 0 \arrow[u]
	\end{tikzcd}  \]
\[
   \begin{tikzcd}
	\arrow[r]&	(\mathscr{X}_{p,\infty}(m-1)^\chi)_\Gamma \arrow[r] & (\mathscr{X}_{\infty}(m-1)^\chi)_\Gamma \arrow[r] & (\mathscr{A}'_\infty(m-1)^\chi)_\Gamma \arrow[r] & 0\\
	 \arrow[r]&	(\mathscr{U}_\infty(m-1)^\chi)_\Gamma \arrow[r]\arrow[u, "\delta"'] & ((\mathscr{U}_\infty/ \overline{\mathscr{C}}_\infty)(m-1))^\chi)_\Gamma  \arrow[r]\arrow[u, "\beta"'] & 0 \\
& 0 \arrow[u]
	\end{tikzcd} \]
For $m>1$, the finiteness of group $H^2(G_S(F),\mathbb{Z}_p(m-1))$ implies the finiteness of $\mathscr{A}'_\infty(m-1)_\Gamma$. By Lemma\ref{lem:Zp-extension}, we know that
$\alpha$ and $\beta$ have finite kernels and cokernels, hence the same is true for coker$\delta$. We obtain
\[   \frac{\#\mbox{ker}\alpha}{\#\mbox{coker}\alpha}\cdot\frac{1}{\#\mbox{coker}\delta}\cdot \frac{1}{\#(\mathscr{A}'_\infty(m-1)^\chi)_\Gamma}=
\frac{1}{\#\mbox{coker}\gamma}\cdot\frac{\#\mbox{ker}\beta}{\#\mbox{coker}\beta}.\]
By Lemma\ref{lem:Zp-extension} and Theorem\ref{thm:Imagi-quad-conj-of-Iwasa}(i), we get
\[ [H^1(\mathcal{O}_F[1/p],\mathbb{Z}_p(m))^\chi:(\overline{\mathscr{C}}_\infty(m-1)^\chi)_\Gamma]=\#\mbox{coker}\delta \cdot \#(\mathscr{A}'_\infty(m-1)^\chi)_\Gamma .\]
There is
\[ (\mathscr{A}'_\infty(m-1)^\chi)_\Gamma\cong \text{\font\fontWCA=wncyr10 {\fontWCA SH}}_S^2(F,\mathbb{Z}_p(m))^\chi, \]
which fits into an exact sequence
\begin{align*}
  0 & \to  \text{\font\fontWCA=wncyr10 {\fontWCA SH}}_S^2(F,\mathbb{Z}_p(m))^\chi \to H^2(\mathcal{O}_F[1/p], \mathbb{Z}_p(m))^\chi\\
   & \to \bigoplus_{v\in S}H^2(F_v, \mathbb{Z}_p(m))^\chi\to (H^0(F,\mathbb{Q}_p/\mathbb{Z}_p(1-m))^*)^\chi \to 0.
\end{align*}
By local duality, $H^2(F_v,\mathbb{Z}_p(m))\cong H^0(F_v,\mathbb{Q}_p/\mathbb{Z}_p(1-m))^*$, hence
\[ H^2(F_v, \mathbb{Z}_p(m))^\chi\cong (H^0(F,\mathbb{Q}_p/\mathbb{Z}_p(1-m))^{\chi^{-1}})^* . \]
Now $\mathbb{Q}_p/\mathbb{Z}_p(1-m)^{\chi^{-1}}=\mathbb{Q}_p/\mathbb{Z}_p^{\chi^{-1}\omega^{m-1}}(1-m)$ is trivial except if $\chi=\omega^{m-1}$, in which case the order of $H^0(F_v,\mathbb{Q}_p/\mathbb{Z}_p(1-m))^*$ is the same as that of $H^0(F_v,\mathbb{Q}_p/\mathbb{Z}_p(m-1))$\textcolor{red}{???}. The same argument
applies to the groups $(H^0(F,\mathbb{Q}_p/\mathbb{Z}_p(1-m))^*)^\chi$ and $H^0(F,\mathbb{Q}_p/\mathbb{Z}_p(m-1)))^\chi$, hence the claim holds.

\textbf{Case 2.} $p$ is inert or ramifies in $K$. If $\chi$ is an irreducible $\mathbb{Z}_p$-representation of $\Delta$ which is nontrivial on the decomposition group of $\mathfrak{p}$ in $\Delta$, then by Theorem\ref{thm:Imagi-quad-conj-of-Iwasa}(ii) and Lemma\ref{lem:herbrand}, through a similar discussion as case 1, we have
$$[H^1(\mathcal{O}_F[1/p],\mathbb{Z}_p(m))^\chi: (\overline{\mathscr{C}}_\infty(m-1)_{\mathscr{G}_F})^\chi]=\frac{\# H^2(\mathcal{O}_F[1/p],\mathbb{Z}_p(m))^\chi}{\#H^0(F,\mathbb{Q}_p/\mathbb{Z}_p(m-1))^\chi}.$$
By \cite[Lemma2.2.10]{Kin01}, we know $\chi$ is non trivial on $\Delta$, where $p$ is a prime over which $E$ has good reduction\textcolor{red}{???}.

\end{proof}

\section{Lichtenbaum conjecture}

Take $\mathfrak{f}$ be the conductor of the abelian extension $F/K$, which is consider as a subfield of $K(\mathfrak{f})$.
\begin{definition}
Let $B_j(K(\mathfrak{f}))$ be the Galois module of $H^1_\mathcal{M}(K(\mathfrak{f}),j+1)$ generated by motivic element $\xi_\chi(j)$, where $\chi$ runs over the character of $G_\mathfrak{f}=\text{Gal}(K(\mathfrak{f})/K)$. Let $B_j(F)\subset H^1_\mathcal{M}(F,j+1)$ be the image of $B_j(K(\mathfrak{f}))$ under corestriction.
\end{definition}

\begin{theorem}
  Let $F=K(\mathfrak{f})$, $E$ be an elliptic curve with CM by $K$. Let $S$ be a set of primes of $2$ and bad primes $p$ over which $E$ has bad reduction. Then the cohomology Lichtenbaum conjecture holds up to the primes in $S$, i.e.,
  \[ \zeta_F(1-m)^*=R_m(F)\prod_p \frac{\# H^2(\mathcal{O}_F[1/p], \mathbb{Z}_p(m))}{\# H^1(\mathcal{O}_F[1/p], \mathbb{Z}_p(m))_{\mbox{tors}}}. \]
\end{theorem}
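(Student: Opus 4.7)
The plan is to prove the identity character-by-character under the Galois group $G = \text{Gal}(F/K)$. Decomposing $\zeta_F(1-m)^{*} = \prod_{\chi} L(F/K,\chi,1-m)^{*}$ and noting that $R_m(F)$, $H^1_\mathcal{M}(F, m)$, and the cohomology groups $H^i(\mathcal{O}_F[1/p], \mathbb{Z}_p(m))$ all split into $\chi$-isotypic pieces under $G$, one reduces to proving the identity on each $\chi$-component. The trivial character contributes $\zeta_K(1-m)$ and must be handled by the corresponding statement for $K$ itself (a degenerate case of the same machinery, with the elliptic-unit family $u(\mathfrak{a})$ playing the role of $\xi_\mathfrak{m}(j)$); for the rest I focus on the nontrivial characters.

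For each nontrivial character $\chi$ of conductor $\mathfrak{m}\mid\mathfrak{f}$, Theorem \ref{Thm3.4} (taken with $j=m-1$) expresses $L'(\chi,-j)$ as an explicit rational multiple of $e_\chi(\rho_\infty(\xi_\mathfrak{m}(j)))$. Hence the Beilinson regulator restricted to the lattice $B_j(F)$ generated by the $\xi_\chi(j)$ computes $\prod_\chi L'(\chi,-j)$ up to controllable constants, and the ratio $\det(\rho_\infty|_{B_j(F)})/R_m(F)$ equals, character by character, the index $[H^1_\mathcal{M}(F,m)^\chi : B_j(F)^\chi]$. It is this index that must now be pinned down $p$-adically.

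For each odd prime $p\notin S$, Theorem \ref{thm:l-adic-regulator} identifies $\rho_{et}(\xi_\mathfrak{m}(j))$ with an explicit trace of elliptic units, so the étale image of $B_j(F)$ coincides, up to explicit factors, with the elliptic-unit subgroup $(\overline{\mathscr{C}}_\infty(m-1)_{\mathscr{G}_F})^\chi$. Theorem \ref{thm:index} then yields
\[ [H^1(\mathcal{O}_F[1/p],\mathbb{Z}_p(m))^\chi : \rho_{et}(B_j(F))^\chi] \;=\; \frac{\# H^2(\mathcal{O}_F[1/p],\mathbb{Z}_p(m))^\chi}{\# H^0(F,\mathbb{Q}_p/\mathbb{Z}_p(m-1))^\chi} \]
up to a $p$-unit. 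Combined with the standard identification of $H^0(F,\mathbb{Q}_p/\mathbb{Z}_p(m-1))^\chi$ with the $\chi$-component of the $p$-part of $H^1(\mathcal{O}_F[1/p],\mathbb{Z}_p(m))_{\text{tors}}$, this supplies exactly the $p$-factor in the Lichtenbaum formula. Multiplying over all $p\notin S$ and all $\chi\neq 1$ (together with the trivial-character contribution) assembles the identity.

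The main obstacle is the careful bookkeeping of the explicit rational constants appearing in Theorems \ref{Thm3.4} and \ref{thm:l-adic-regulator}: factors such as $(2\mathcal{N}\mathfrak{m})^{j-1}$, $(2j)!$, $\Phi(\mathfrak{f})/\Phi(\mathfrak{m})$, $\chi(\rho_{\mathfrak{f}_\chi})$, $(\mathcal{N}\mathfrak{a}-\sigma_\mathfrak{a})^{-1}$, $w_\mathfrak{m}$, and the auxiliary Euler factors at primes above $\ell$. One must verify that, after multiplying across all characters and all primes $p\notin S$, these either cancel pairwise or become $p$-units at each $p$ under consideration, so that only the intrinsic invariants on the right side of the Lichtenbaum formula survive. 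The exclusion of $p=2$ absorbs the sign ambiguity in the conjecture and the factor of $2$ between the Borel and Beilinson regulator normalizations recorded in the introduction, while the exclusion of primes of bad reduction is required for Theorem \ref{thm:index} to apply.
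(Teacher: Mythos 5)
Your proposal follows essentially the same route as the paper: decompose $\zeta_F$ into Artin $L$-functions over characters of $\mathrm{Gal}(F/K)$, use Theorem \ref{Thm3.4} to express the archimedean regulator of the motivic lattice $B_m$ in terms of $\prod_\chi L'(\chi,1-m)$, transfer the index $[K_{2m-1}(F)/\mathrm{tors}:B_m]$ to the index of the elliptic-unit module in $H^1(\mathcal{O}_F[1/p],\mathbb{Z}_p(m))$ via Theorem \ref{thm:l-adic-regulator}, evaluate that index by Theorem \ref{thm:index}, and identify $H^0(F,\mathbb{Q}_p/\mathbb{Z}_p(m-1))$ with $H^1(\mathcal{O}_F[1/p],\mathbb{Z}_p(m))_{\mathrm{tors}}$. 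Your explicit flagging of the trivial-character contribution and of the need to check that the rational constants in Theorems \ref{Thm3.4} and \ref{thm:l-adic-regulator} are $p$-units outside $S$ is, if anything, more careful than the paper's own write-up, which leaves the product of these constants unaddressed at the end of its proof.
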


\begin{proof}
  Let $G=\mbox{Gal}(F/K), \widehat{G}$ be the characters group of $G$. Then we have
  \begin{equation}\label{equ:L-function-euqation}
    \zeta_F(s)=\prod_{\chi\in \widehat{G} }L(\chi,s).
  \end{equation}

  Theorem\ref{Thm3.4} immediately gives the covolume of $\rho_\infty(B_m(K(\mathfrak{f})))$ in $\mathbb{R}(m-1)^+$, that is,
$$\text{covol}(\rho_\infty(B_m(K(\mathfrak{f}))))=\prod_\chi\frac{(2\mathcal{N}\mathfrak{f})^{m-1}\Phi(\mathfrak{f}_\chi)}{(-1)^{1-m}(2m)!\Phi(\mathfrak{f})}
  L'(\chi,1-m).$$
But the Beilinson regulator is
$$R_m(K(\mathfrak{f}))=\frac{\text{covol}(\rho_\infty(B_m(K(\mathfrak{f}))))}{[K_{2m-1}(K(\mathfrak{f}))/\text{tors}:B_m(K(\mathfrak{f}))]}.$$
By \eqref{equ:L-function-euqation}, we have
\[\zeta_F(1-m)^*= R_m(F) \cdot [K_{2m-1}(K(\mathfrak{f}))/\text{tors}:B_m(K(\mathfrak{f}))] \cdot
\prod_\chi\frac{(-1)^{1-m}(2m)!\Phi(\mathfrak{f})}{(2\mathcal{N}\mathfrak{f})^{m-1}\Phi(\mathfrak{f}_\chi)}\]

Theorem\ref{thm:l-adic-regulator} conclude that
\[ [K_{2m-1}(K(\mathfrak{f}))/\text{tors}:B_m(K(\mathfrak{f}))]\sim_p [H^1(\mathcal{O}_F[1/p],\mathbb{Z}_p(m)): (\overline{\mathscr{C}}_\infty(m-1)_{\mathscr{G}_F})], \]
which are equal up to a $p$-adic unit.

From Theorem\ref{thm:index}, we get that
\[\zeta_F(1-m)^*= R_m(F) \cdot\prod_p \frac{\# H^2(\mathcal{O}_F[1/p],\mathbb{Z}_p(m))}{\#H^0(F,\mathbb{Q}_p/\mathbb{Z}_p(m-1))}\cdot
\prod_\chi\frac{(-1)^{1-m}(2m)!\Phi(\mathfrak{f})}{(2\mathcal{N}\mathfrak{f})^{m-1}\Phi(\mathfrak{f}_\chi)}.\]
Lemma2.2 in \cite{KNF} shows that $H^0(F,\mathbb{Q}_p/\mathbb{Z}_p(m-1))=H^1(\mathcal{O}_F[1/p], \mathbb{Z}_p(m))_{\mbox{tors}}$.
\end{proof}

$\mathscr{SUN }$\quad  $\mathscr{ CHAOS}$

\bibliographystyle{amsplain}

\end{document}